\documentclass[11pt,thmsa]{article}%
\usepackage{amsmath}
\usepackage{amsfonts}
\usepackage{amssymb}
\usepackage{graphicx}%
\topmargin -0.8cm \textwidth 14.5cm \textheight 22cm
\newtheorem{theorem}{Theorem}[section]

\newtheorem{lemma}[theorem]{Lemma}

\newtheorem{proposition}[theorem]{Proposition}

\newenvironment{proof}[1][Proof]{\noindent\textbf{#1.} }{\ \rule{0.5em}{0.5em}}

\begin{document}
\title{Clifford-Wolf homogeneous Finsler metrics on spheres\footnote{Supported by NSFC (no. 11271198, 11221091,11271216) and SRFDP of China}}
\author{Ming Xu$^1$ and Shaoqiang Deng$^2$\thanks{S. Deng is the corresponding author. E-mail: dengsq@nankai.edu.cn}\\
\\
$^1$College of Mathematical Sciences\\Tianjin Normal University \\Tianjin 300387\\People's  Republic of China\\
\\$^2$School of Mathematical Sciences and LPMC\\
Nankai University\\
Tianjin 300071\\
People's Republic of China}
\date{}
\maketitle
\begin{abstract}
An isometry of a Finsler space is called Clifford-Wolf translation (CW-translation) if it moves all points the same distance. A Finsler space $(M, F)$ is called
  Clifford-Wolf homogeneous (CW-homogeneous) if for any $x, y\in M$ there is a CW-translation $\sigma$ such that $\sigma (x)=y$. We prove that if $F$ is a homogeneous Finsler metric on the sphere $S^n$ such that $(S^n, F)$ is CW-homogeneous, then $F$ must be a Randers metric. This gives a complete classification of CW-homogeneous Finsler metrics on spheres.

\textbf{Mathematics Subject Classification (2000)}: 22E46, 53C30.

\textbf{Key words}: Finsler spaces, Clifford-Wolf translations, Killing vector fields, homogeneous Randers manifolds.
\end{abstract}

\section{Introduction}
It is our goal in this article to give a complete classification of Clifford-Wolf homogeneous Finsler metrics on spheres. Recall that an isometry $\sigma$ of
a connected Finsler space $(M, F)$ is called a Clifford-Wolf translation (simply CW-translation) if $d (x, \sigma (x))$ is a constant function on $M$.
A Finsler space $(M, F)$ is called Clifford-Wolf homogeneous (simply CW-homogeneous) if for any $x, y\in M$ there is an CW-isometry $\sigma$ such that $\sigma (x)=y$. More generally, a Finsler space $(M, F)$  is called restrictively CW-homogeneous if for any $x\in M$, there is an open neighborhood $V$ of $x$ such that for any $y\in V$ there is a CW-translation $\sigma$ of $(M,Q)$ such that $\sigma(x)=y$. According to the study of  V.N. Berestovskii and Yu.G. Nikonorov in \cite{BN09}, any restrictively CW-homogeneous Riemannian manifold must be locally symmetric. Based on this, the authors of \cite{BN09} obtained a complete classification of all connected simply connected CW-homogeneous Riemannian manifolds. The complete list consists of   compact Lie groups with bi-invariant Riemannian metrics, the odd-dimensional spheres with standard metrics, the symmetric spaces $\mathrm{SU}(2n+2)/\mathrm{Sp}(n)$ with the standard symmetric metrics, and the products of the above Riemannian manifolds.

In our previous considerations (\cite{DM1}-\cite{DM4}),  we have initiated the study of CW-translations of Finsler spaces and have found some new phenomena. In particular, there exist some
non-Riemannian Randers metrics on spheres as well as compact Lie groups that are CW-homogeneous but essentially not locally symmetric. This raises the interesting problem to classify CW-homogeneous Finsler spaces.

In this article we prove that any CW-homogeneous Finsler metric on a sphere must be a Randers metric. More precisely, we have
\begin{theorem}\label{main}
Let $F$ be a homogeneous Finsler metric on a sphere $S^n$ such that $(S^n, F)$ is a CW-homogeneous Finsler space. Then $F$ must be a Randers metric
\end{theorem}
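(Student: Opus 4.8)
The plan is to combine the infinitesimal characterization of CW-homogeneity via Killing fields of constant length with the classification of transitive actions of compact groups on spheres, and then to run a case-by-case analysis. Since $(S^n,F)$ is homogeneous, $G=I_0(S^n,F)$ is a compact connected Lie group acting transitively and effectively on $S^n$, so by the Montgomery--Samelson--Borel classification the pair $(G,H)$, with $H$ the isotropy group at a base point $o$, is conjugate to one of $(\mathrm{SO}(n+1),\mathrm{SO}(n))$; $(\mathrm{SU}(m),\mathrm{SU}(m-1))$ or $(\mathrm{U}(m),\mathrm{U}(m-1))$ on $S^{2m-1}$; $(\mathrm{Sp}(m),\mathrm{Sp}(m-1))$, $(\mathrm{Sp}(m)\mathrm{U}(1),\mathrm{Sp}(m-1)\mathrm{U}(1))$ or $(\mathrm{Sp}(m)\mathrm{Sp}(1),\mathrm{Sp}(m-1)\mathrm{Sp}(1))$ on $S^{4m-1}$; $(G_2,\mathrm{SU}(3))$ on $S^6$; $(\mathrm{Spin}(7),G_2)$ on $S^7$; or $(\mathrm{Spin}(9),\mathrm{Spin}(7))$ on $S^{15}$. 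Fixing a reductive complement $\mathfrak g=\mathfrak h\oplus\mathfrak m$, the metric $F$ becomes an $\mathrm{Ad}(H)$-invariant Minkowski norm on $\mathfrak m\cong T_oS^n$, and the assertion ``$F$ is Randers'' becomes: the indicatrix of this norm is an ellipsoid, translated off-center (if at all) by a vector fixed by $\mathrm{Ad}(H)$.

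Next I would invoke the Finsler counterpart (from \cite{DM1}--\cite{DM4}) of the Berestovskii--Nikonorov analysis: on the compact space $(S^n,F)$, CW-homogeneity --- indeed even restrictive CW-homogeneity --- implies that for every $v\in\mathfrak m$ there is a Killing field $X$ of constant Finslerian length with $X(o)=v$; moreover the integral curves of a constant-length Killing field are geodesics, and, since $G$ is transitive, $X\in\mathfrak g$ has constant length exactly when $g\mapsto F(\mathrm{pr}_{\mathfrak m}\mathrm{Ad}(g)X)$ is constant on $G$. Consequently the set of values at $o$ of such fields is an $\mathrm{Ad}(H)$-invariant cone spanning $\mathfrak m$, and for each such value $v$ the whole $\mathrm{Ad}(G)$-orbit of a corresponding $X$ projects into a single level set of $F$. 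This is the rigidity engine for the case analysis.

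I would then go through the list. For $(\mathrm{SO}(n+1),\mathrm{SO}(n))$, $(G_2,\mathrm{SU}(3))$ and $(\mathrm{Spin}(7),G_2)$ the isotropy acts transitively on the unit sphere of $\mathfrak m$, so every invariant Minkowski norm is a multiple of a Euclidean one, $F$ is Riemannian, and we are done. The remaining presentations have reducible isotropy: $\mathfrak m\cong\mathbb C^{m-1}\oplus\mathbb R$ for $S^{2m-1}$; $\mathfrak m\cong\mathbb H^{m-1}\oplus\mathbb R^3$, or $\mathbb H^{m-1}\oplus\mathbb C\oplus\mathbb R$, or $\mathbb H^{m-1}\oplus\mathbb R^3$ with $\mathrm{Sp}(1)$ acting on the second factor as $\mathrm{SO}(3)$, for $S^{4m-1}$; and $\mathfrak m\cong\mathbb R^7\oplus\mathbb R^8$ for $S^{15}$. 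In each of these I would first describe all constant-length Killing fields explicitly --- these arise from the obvious Hopf-type invariant directions, together with those adjoint orbits in $\mathfrak g$ whose $\mathfrak m$-projection happens to lie on a single $F$-level set --- and then show that such a family can span $\mathfrak m$ only if the indicatrix of $F$ is an ellipsoid, possibly translated along the \emph{unique} trivial summand of the isotropy representation. When there is no such summand, or it carries an irreducible action of a simple factor ($\mathrm{Sp}(1)=\mathrm{SO}(3)$ on $\mathbb R^3$, or the pieces $\mathbb R^7$, $\mathbb R^8$), the translation is forced to vanish and $F$ is Riemannian; when it is one-dimensional (the cases $\mathbb C^{m-1}\oplus\mathbb R$ and $\mathbb H^{m-1}\oplus\mathbb C\oplus\mathbb R$) one obtains $F=\alpha+\beta$ with $\beta$ a constant multiple of the metric dual of the corresponding Hopf Killing field --- precisely a Randers metric, the bound $\|\beta\|<1$ being automatic because $F$ is a genuine Minkowski norm.

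The main obstacle is exactly this last step in the reducible cases: proving that a non-Randers $\mathrm{Ad}(H)$-invariant Minkowski norm --- whose indicatrix need not be symmetric in all directions --- cannot admit a spanning family of constant-length Killing fields. This requires a delicate, case-dependent study of the adjoint orbits of $\mathfrak g$, of which of their $\mathfrak m$-projections can lie on a common level set of $F$, and of the pointwise convexity constraints imposed by the geodesic property of constant-length Killing fields (the Finsler structure theory from \cite{DM1}--\cite{DM4}). The bookkeeping for $\mathrm{Spin}(9)/\mathrm{Spin}(7)$ and for the three quaternionic presentations of $S^{4m-1}$ --- where $\mathfrak m$ carries several inequivalent isotropy summands --- is where the real effort concentrates, with the Riemannian classification of Berestovskii--Nikonorov serving throughout as the template for the Riemannian sub-conclusions.
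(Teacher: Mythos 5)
Your overall architecture coincides with the paper's: reduce via the Montgomery--Samelson classification of transitive actions on spheres, translate (restrictive) CW-homogeneity into the statement that every tangent vector extends to a Killing field of constant length, and note that for such a field the projection of its whole $\mathrm{Ad}(G)$-orbit must lie in one level set of $F$. Your treatment of the transitive-isotropy cases $\mathrm{SO}(n+1)/\mathrm{SO}(n)$, $G_2/\mathrm{SU}(3)$ and $\mathrm{Spin}(7)/G_2$ (isotropy transitive on the unit sphere of $\mathfrak m$, hence any invariant Minkowski norm is Euclidean) is correct and in fact simpler than the paper, which handles $G_2/\mathrm{SU}(3)$ by an equal-rank argument and $\mathrm{Spin}(7)/G_2$ by an explicit triality computation.

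However, there is a genuine gap: for exactly the cases on which the theorem turns, you state the desired conclusions but do not prove them, and you yourself flag them as ``the main obstacle.'' Concretely: (i) for $G=\mathrm{U}(m)$, $\mathrm{SU}(m)$, $\mathrm{Sp}(m)\mathrm{U}(1)$, $\mathrm{Sp}(m)\mathrm{Sp}(1)$ the paper pins down the possible constant-length fields by intersecting the adjoint orbit with diagonal (Weyl-orbit) representatives, showing their $\mathfrak m$-projections lie on a line and hence that the eigenvalue pattern is forced, and then uses the computations of \cite{DM1}--\cite{DM4} to identify $\mathrm{pr}(\mathcal O_X)$ as an ellipsoid (whence Randers) or a round sphere; your proposal asserts the ellipsoid conclusion without any mechanism for proving it. (ii) The case $G=\mathrm{Sp}(m)$ does not fit your ``unique trivial summand'' dichotomy at all, since there the trivial isotropy summand is the three-dimensional $\mathrm{Im}\,\mathbb H$; the resolution (from \cite{DM4}) is that $\mathfrak{sp}(m)$ contains no nonzero Killing field of constant length, so this case simply cannot occur as the full isometry group of a CW-homogeneous metric, a step absent from your plan. (iii) For $S^{15}=\mathrm{Spin}(9)/\mathrm{Spin}(7)$ the isotropy is not transitive on the unit sphere of $\mathfrak m\cong\mathbb R^7\oplus\mathbb R^8$ and invariant non-round Riemannian (let alone Finsler) metrics do exist, so ``the translation vanishes, hence $F$ is Riemannian'' is not a proof; the paper instead shows by an explicit octonionic/triality computation (conjugating the triple $(L_{e_1},R_{e_1},L_{e_1}+R_{e_1})$ by the rotation $f_{\pi/2}$ and comparing the projections $e_1$ and $2e_1$) that no nonzero constant-length Killing field exists, excluding the case outright. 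Without these three blocks of actual computation, the argument is a roadmap rather than a proof of the theorem.
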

Combining Theorem \ref{main} with the classification of CW-homogeneous Randers metrics in \cite{DM4},  we obtain a complete classification of CW-homogeneous Finsler metrics on spheres. Taking into account of the classification of CW-homogeneous Riemannian manifolds of \cite{BN09}, we can say that this fact is an important step to a complete classification of CW-homogeneous Finsler spaces.

In Section 2, we recall some definitions and fundamental results. Section 3 is devoted to giving a theme for the proof of the main theorem. In Section 4, we study triality for Cayley numbers and homogeneous spheres, and obtain a lemma which is useful for proving the main theorem. In Section 5, we complete the proof of Theorem \ref{main}.

\section{Preliminaries}

A Finsler metric $F$ on a manifold $M$ is a continuous function $F:TM\rightarrow \mathbb{R}^+$
which is smooth on $TM\backslash 0$, and satisfies the following conditions for any induced local  coordinates $(x^i,y^j)$
on $TM$, where (by some abuse of notation) the  $x^i$'s are the local coordinates on $M$:
\begin{description}
\item{(1)}\quad $F(x,y)>0$ for any $y\neq 0$;
\item{(2)}\quad $F(x,\lambda y) = \lambda F(x,y)$ for any  $\lambda > 0$;
\item{(3)}\quad The Hessian matrix $g_{ij}(x,y)(v)=\frac{1}{2}[F^2]_{y^i y^j}(v)$ is positive definite for all nonzero tangent $v$ in the domain of the induced chart on $TM$.
\end{description}
If $(M,g)$ is a Riemannian manifold and $F$ is the norm associated with $g$, then $(M, F)$ is clearly a   Finsler space.  As another example, let $\alpha$ be a Riemannian metric on a manifold $M$  and $\beta$ be a one-form with $||\beta||_{\alpha}=\mathop{\sup}\limits_{v\in T_x(M)- 0}\frac{\beta(v)}{\sqrt{\alpha(v,v)}}<1$. Then the  function $F=\alpha+\beta$ on $TM\backslash 0$  defines a Finsler metric on $M$, called a Randers metric. Randers metrics are
 the most simple and important Finsler metrics. It is obvious that the Randers metric $F$ as defined above is Riemannian if and only if $\beta=0$.

The Finsler metric $F$ naturally defines a (generally nonsymmetric) distance function on $M$, which we denote by $d$. The concept of Clifford-Wolf translation can be naturally extended to Finsler manifolds, i.e., a Clifford-Wolf translation (CW-translation) $\rho$ is an isometry of $(M,F)$ such that the function which sends a point $p$ of $M$ to the real number $d(p, \rho (p))$
is constant.

A Finsler manifold $(M,F)$ is called homogeneous if the isometry group $I(M,F)$ (which is a Lie
group) acts transitively on $M$. It is  called CW-homogeneous if the transitivity can be
achieved by the set of CW-translations. There is a slightly weaker version of CW-homogeneity, called
the restrictive CW-homogeneity. A Finsler manifold $(M,F)$ is called restrictively CW-homogeneous
if for any $x\in M$, there is a neighborhood $U$ of $x$ such that any point of $ U$ can be obtained as the image of  $x$ by a
CW-translation.

The study of CW-translations and CW-homogeneity in Finsler geometry was initiated in \cite{DM1}, by the interrelations between one-parameter local semi-groups of CW-translations and Killing vector fields of constant length (KFCLs).

Restrictive CW-homogeneity can be equivalently described as certain properties of KFCLs, namely, we have

\begin{proposition}
A homogeneous Finsler manifold $(M,F)$ is restrictively CW-homogeneous if and only if
any tangent vector can be extended to a KFCL for $(M,F)$.
\end{proposition}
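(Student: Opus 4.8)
\medskip\noindent\textbf{Proof strategy.} The plan is to exploit the correspondence, established in \cite{DM1}, between one-parameter local semigroups of CW-translations and Killing vector fields of constant length: a (complete) Killing field $X$ on $(M,F)$ has constant length if and only if there is an $\varepsilon>0$ such that for every $t$ with $0\le t<\varepsilon$ the time-$t$ map $\varphi^X_t$ of the flow of $X$ is a CW-translation of $(M,F)$. With this in hand the two implications are treated separately.

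For the ``if'' direction, assume every tangent vector extends to a KFCL and fix $x\in M$. For each $v\in T_xM$ pick a KFCL $X_v$ with $X_v(x)=v$; by the correspondence above there is $\varepsilon_v>0$ such that $\varphi^{X_v}_t$ is a CW-translation for $0\le t<\varepsilon_v$, and the orbit $t\mapsto\varphi^{X_v}_t(x)$ starts at $x$ with velocity $v$. Thus every point $\varphi^{X_v}_t(x)$ with $0\le t<\varepsilon_v$ is the image of $x$ under a CW-translation. Using that the indicatrix $\{v\in T_xM:F(v)=1\}$ is compact and that the set of KFCLs is closed in $\mathfrak{g}=\mathrm{Lie}(I(M,F))$, one extracts a uniform bound $\varepsilon_0>0$ valid for all unit $v$; since $v$ ranges over all of $T_xM$, the corresponding orbit segments emanate from $x$ in every direction with uniformly bounded length, hence their union contains a neighborhood $V$ of $x$. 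Therefore $x$ can be moved to an arbitrary point of $V$ by a CW-translation, i.e. $(M,F)$ is restrictively CW-homogeneous.

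For the ``only if'' direction, assume $(M,F)$ is restrictively CW-homogeneous, fix $x\in M$ and a nonzero $v\in T_xM$, and choose a neighborhood $V$ of $x$ together with CW-translations $\sigma_y$, $y\in V$, with $\sigma_y(x)=y$. Pick a smooth curve $\gamma$ in $V$ with $\gamma(0)=x$ and $\gamma'(0)=v$; the isometries $\sigma_{\gamma(s)}\in G=I(M,F)$ move every point the same distance $d(x,\gamma(s))=sF(v)+o(s)$, so they cluster at an isometry fixing $x$ as $s\to0$. The core of the argument is now Lie-theoretic and follows \cite{BN09}: from the family $\{\sigma_{\gamma(s)}\}$ one extracts a one-parameter subgroup $\{\exp(tX)\}\subset G$ whose generator $X$ satisfies $X(x)=\lambda v$ for some $\lambda>0$ and all of whose elements $\exp(tX)$, $|t|$ small, are CW-translations; applying the correspondence of \cite{DM1} in the reverse direction shows $X$ is a KFCL, and then $\lambda^{-1}X$ is a KFCL extending $v$.

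The step I expect to be the main obstacle is precisely this last extraction: producing a genuine one-parameter subgroup of $G$ out of the a priori unstructured family $\{\sigma_y\}_{y\in V}$ and verifying that its generator still has constant length. One must control the displacement functions $p\mapsto d(p,\sigma_{\gamma(s)}(p))$ as $s\to0$ and pass to the limit inside the isometry Lie group without losing the ``moves all points equally'' property; this is the Finsler analogue of the key lemma of Berestovskii and Nikonorov. By contrast the ``if'' direction is essentially routine once the uniform bound $\varepsilon_0$ is established, which only uses compactness of the indicatrix and closedness of the set of KFCLs.
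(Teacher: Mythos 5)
The paper itself contains no proof of this proposition: it is recalled as a known equivalence from \cite{DM1} (see also \cite{DM4}), where it is established precisely via the correspondence between KFCLs and local one-parameter semigroups of CW-translations that you invoke. So your overall strategy is the intended one; the problem is that, as written, the two places where the real work lies are not actually carried out, so this is a plan rather than a proof.

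In the ``only if'' direction you defer the decisive step --- producing from the unstructured family $\{\sigma_{\gamma(s)}\}$ a one-parameter subgroup of $I(M,F)$ consisting of CW-translations for small time, with generator proportional to $v$ at $x$ --- to ``follows \cite{BN09}''. That limiting argument (write $\sigma_{\gamma(s_n)}=\exp Z_n$ with $Z_n\to 0$ in $\mathrm{Lie}(I(M,F))$, normalize by the displacement $t_n=d(x,\gamma(s_n))$, show $Z_n/t_n$ stays bounded, pass to a limit $X$, and check that $\exp(tX)$ arises as a limit of powers $\sigma_{\gamma(s_n)}^{k_n}$ whose constant displacements converge to $t$, so that $X$ has constant length) is exactly the nontrivial content of the proposition; flagging it as ``the main obstacle'' does not discharge it, and it is precisely what must be verified in the Finsler (possibly nonreversible) setting. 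In the ``if'' direction there is a genuine gap as well: the union of the orbit segments $t\mapsto\varphi^{X_v}_t(x)$ need not cover a neighborhood of $x$ just because the initial velocities exhaust all directions, since the assignment $v\mapsto X_v$ is an arbitrary choice and need not be continuous, so no invariance-of-domain argument applies. What saves the argument is the fact (valid for Finsler metrics, cf. \cite{DM1}) that integral curves of a Killing field of constant length are geodesics; then the union of these segments is the forward metric ball of radius $\varepsilon_0$ about $x$. Relatedly, the uniform $\varepsilon_0$ should be extracted from homogeneity (the injectivity radius is constant in $x$, hence bounded below, and the displacement of $\varphi^{X_v}_t$ is controlled by $t$ for unit-length fields), not from ``compactness of the indicatrix plus closedness of the set of KFCLs'': for a noncompact homogeneous $(M,F)$ the relevant subset of $\mathrm{Lie}(I(M,F))$ need not be compact, so closedness alone gives no uniform bound.
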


Let $M=G/H$ be a homogeneous manifold such that $G$ acts almost effectively on $M$. Suppose $F$ is a $G$-invariant Finsler metric on $M$.  Denote the Lie algebras of $G$ and $H$
by $\mathfrak{g}$ and $\mathfrak{h}$, respectively. The tangent space $T_x(M)$ can be identified with
$\mathfrak{m}=\mathfrak{g}/\mathfrak{h}$. The natural projection from $\mathfrak{g}$ to $\mathfrak{m}$
will be denoted as $\mathrm{pr}$.
As a subalgebra of $\mathrm{Lie}\,(I(M,F))$, any vector $X$ in $\mathfrak{g}$ defines
a Killing vector field of $(M,F)$ (the fundamental vector field generated by $X$), which will also be denoted as $X$. The following easy lemma can be used to determine whether a Killing field $X$ is of constant length.

\begin{lemma} A Killing vector field $X\in \mathfrak{g}$ is of constant length $1$ if and only if
$\mathrm{pr}(\mathcal{O}_X)$ is contained in the indicatrix of $F$ in $\mathfrak{m}=T_x(M)$, where
$\mathcal{O}_X$ is the $\mathrm{Ad}(G)$-orbit of $X$.
\end{lemma}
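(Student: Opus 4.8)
The plan is to reduce the constancy of the length of $X$ to a single-point computation by exploiting the $G$-invariance of $F$. Fix the base point $x=eH\in M$ and recall the identification $T_x(M)\cong\mathfrak{m}$ under which a vector $Y\in\mathfrak{g}$ is sent to $\mathrm{pr}(Y)$; concretely, $\mathrm{pr}(Y)$ is the value $Y_x$ at $x$ of the fundamental vector field generated by $Y$. Since $G$ acts transitively on $M$, every point of $M$ has the form $g\cdot x$ for some $g\in G$, so it suffices to understand $F(X_{g\cdot x})$ for all $g\in G$.

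First I would record the standard equivariance property of fundamental vector fields: for any $g\in G$ one has $dg^{-1}\big(X_{g\cdot x}\big)=(\mathrm{Ad}(g^{-1})X)_x$, which follows by differentiating $g^{-1}\exp(tX)g=\exp\big(t\,\mathrm{Ad}(g^{-1})X\big)$ at $t=0$ after applying $g^{-1}$ to the point $g\cdot x$. Combining this with the fact that $g^{-1}$ is an isometry of $(M,F)$, we obtain
\[
F\big(X_{g\cdot x}\big)=F\big(dg^{-1}(X_{g\cdot x})\big)=F\big((\mathrm{Ad}(g^{-1})X)_x\big)=F\big(\mathrm{pr}(\mathrm{Ad}(g^{-1})X)\big).
\]
Thus the length of $X$ at an arbitrary point $g\cdot x$ equals the $F$-norm at $x$ of the projection of the corresponding point of the adjoint orbit.

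The equivalence now follows at once: $X$ is of constant length $1$ if and only if $F(X_p)=1$ for every $p\in M$, which by the displayed identity is equivalent to $F\big(\mathrm{pr}(\mathrm{Ad}(g^{-1})X)\big)=1$ for every $g\in G$; as $g$ ranges over $G$, the vector $\mathrm{Ad}(g^{-1})X$ ranges over the whole $\mathrm{Ad}(G)$-orbit $\mathcal{O}_X$, so this says exactly that $\mathrm{pr}(\mathcal{O}_X)$ is contained in the indicatrix $\{v\in\mathfrak{m}:F(v)=1\}$ of $F$ at $x$. There is no serious obstacle in this argument; the only points requiring a little care are fixing the convention for fundamental vector fields so that the equivariance identity comes out with $\mathrm{Ad}(g^{-1})$ on the correct side, and observing that it is precisely the $G$-invariance of $F$ that permits moving the base point in the first two equalities of the display.
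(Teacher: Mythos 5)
Your argument is correct and coincides with the paper's own justification: the paper also computes $d(L_{g^{-1}})(X_p)=\mathrm{pr}(\mathrm{Ad}(g^{-1})X)$ by differentiating $g^{-1}\exp(tX)gH$ and then uses the $G$-invariance of $F$ to conclude that $\mathrm{pr}(\mathcal{O}_X)$ exhausts the isometrically pulled-back values of $X$, from which the equivalence with lying on the indicatrix is immediate. No differences worth noting.
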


Given any $p\in M$, we can write $p=gH $ for some $g\in G$. Note that $F$ is $G$-invariant. Hence the map $L_g: g_1H\to gg_1H$ is an isometry of $M$.
The inverse map $(L_g)^{-1}=L_{g^{-1}}$ is then an isometry of $M$ sending $p$ to the origin $x$. Suppose $X$ is a Killing vecotr field of $(M, F)$. Then we have
\begin{eqnarray*}
d(L_{g^{-1}})(X_p) &=&d(L_{g^{-1}})\frac{d}{dt}\{(\exp tX)gH\}\left|_{t=0}\right.\\
&=&\frac{d}{dt}\{g^{-1}(\exp tX)gH\}\left|_{t=0}\right.\\
&=&\frac{d}{dt}\{\exp (t\mathrm{Ad}(g^{-1})X)H\}\left|_{t=0}\right.\\
&=& \mathrm{pr}(\mathrm{Ad}(g^{-1})X).
\end{eqnarray*}
This means that the set ${\rm pr}(\mathcal{O}_X)$ exhausts the values of the Killing vector field $X$ at all points
when isometrically pulled back to $T_x(M)=\mathfrak{m}$.

\section{Theme for the proof of the main theorem: CW-translations and KFCLs}
\label{first}

The proof of the main theorem (Theorem \ref{main}) will be completed through a case by case discussion. Notice that for any Finsler manifold
 $(M,F)$, we can define a Riemannian metric $Q$ on $M$ by taking the average of the Hessian $(g_{ij})$ on the indicatrix of $F$. This implies that the group $I(M,F)$ is contained in $I(M,Q)$, and the connected component $I_0(M,F)$ is
 contained in $I_0(M,Q)$. Moreover, the Riemannian manifold $(M,Q)$ will be homogeneous if $(M,F)$ is homogeneous.

Now suppose $(M,F)$ is a  homogeneous Finslerian sphere. Then we can write $M=G/H$, where $G$ is a closed connected subgroup of $I_0(M,F)
\subset I_0(M,Q)$, with $Q$ being a Riemannian metric on $M$. By the results of \cite{MS43},  $M=G/H$ must be one of the following:
\begin{description}
\item{(1)}\quad $S^n=\mathrm{SO}(n+1)/\mathrm{SO}(n)$;
\item{(2)}\quad $S^{2n+1}=\mathrm{U}(n+1)/\mathrm{U}(n)$;
\item{(3)}\quad $S^{2n+1}=\mathrm{SU}(n+1)/\mathrm{SU}(n)$;
\item{(4)}\quad $S^{4n+3}=\mathrm{Sp}(n+1)/\mathrm{Sp}(n)$;
\item{(5)}\quad $S^{4n+3}=\mathrm{Sp}(n+1)S^1/\mathrm{Sp}(n)S^1$;
\item{(6)}\quad $S^{4n+3}=\mathrm{Sp}(n+1)\mathrm{Sp}(1)/\mathrm{Sp}(n)\mathrm{Sp}(1)$;
\item{(7)}\quad $S^6=G_2/\mathrm{SU}(3)$;
\item{(8)}\quad $S^7=\mathrm{Spin}(7)/G_2$;
\item{(9)}\quad $S^{15}=\mathrm{Spin}(9)/\mathrm{Spin}(7)$.
\end{description}

Now we start the proof of the main theorem.
Assume that $(M,F)$ is a Finslerian CW-homogeneous sphere with a connected transitive group of isometries,  $G\subset I_0(M,F)$.
If $G$ is $\mathrm{SO}(n+1)$ as in case (1),
then $F$ is obviously Riemannian. By the results of \cite{BN09}, $M$ must be odd-dimensional. In the following we consider the case  when $G\ne \mathrm{SO}(n+1)$.

The cases that $G=\mathrm{SU}(n+1)$, $\mathrm{U}(n+1)$, $\mathrm{Sp}(n+1)$, $\mathrm{Sp}(n+1)\mathrm{U}(1)$ or $\mathrm{Sp}(n+1)\mathrm{Sp}(1)$ have essentially been discussed earlier
in our previous papers (\cite{DM1}-\cite{DM4}). Their Lie algebras are the classical ones, possibly directly summed with a one-dimensional center or a copy of  summand that is isomorphic to $\mathfrak{sp}(1)$. The projections from $\mathfrak{g}$ to $\mathfrak{m}$ are their left multiplication on the column
vector $(0,0,\ldots,0,1)^t$. When $G=\mathrm{Sp}(n+1)\mathrm{U}(1)$ or $\mathrm{Sp}(n+1)\mathrm{Sp}(1)$, the $\mathfrak{u}(1)$ or $\mathfrak{sp}(1)$ factor has a scalar action on the column vectors from the right. Now let us recall the main results in these cases.

When $G=\mathrm{U}(n+1)$ or $\mathrm{Sp}(n+1)\mathrm{U}(1)$, the actions of its center provide $M$ with an $S^1$-principal bundle structure.
Any vector which is not tangent to a fiber can be extended to a Killing vector field of constant length
which is not contained in the center.
If a Killing vector field $X\in \mathfrak{u}(n+1)$ is of constant length $1$ on
$M=S^{2n+1}=\mathrm{U}(n+1)/\mathrm{U}(n)$, and if $X$ is not a central vector, then its $\mathrm{Ad}(\mathrm{U}(n+1))$-orbit
contains a Weyl group orbit of diagonal matrices, whose $\mathrm{pr}$-images are all contained in
a one-dimensional subspace coordinated by the eigenvalues of $X$. So the eigenvalues of $X$
 either are all the same, or  have two
different values with different signs. It is obvious that the eigenvalues of $X$ cannot be all the same, since $X$ is not in the center.
Therefore $X$ has two different eigenvalues with different signs. As  calculated in \cite{DM1}, the $\mathrm{Ad}(\mathrm{U}(n+1))$-orbit of $X$
is an ellipsoid. This implies that $F$ is a Randers metric. The discussion for $X\in \mathfrak{su}(n+1)$ is similar,
and the full connected isometry group is $\mathrm{U}(n+1)$ or $\mathrm{SO}(2n+2)$.

If $X\in \mathfrak{sp}(n+1)\oplus \mathbf{i}\mathbb{R}$
is a Killing vector field of constant length $1$ on $M=S^{4n+3}=\mathrm{Sp}(n+1)\mathrm{U}(1)/\mathrm{Sp}(n)\mathrm{U}(1)$, which is not contained
in the center,
we can  similarly find a subset of $\mathcal{O}_X$, in which each element can be represented as
$(\mathbf{i}X',a\mathbf{i})$, such that  all the $X'$s are nonzero real diagonal matrices, and are different
by Weyl group actions (i.e., permutation and sign changes for the diagonal entries), and $a$ is a real number
which  depends only on the $\mathbf{i}\mathbb{R}$ factor of $X$. They are all projected into a line
by $\mathrm{pr}$. Similarly as in the above case, one can show that all diagonal entries of $X$ have the same absolute values. We
have also proved  in \cite{DM4} that the set $\mathrm{pr}\mathcal{O}_X$ must be an ellipsoid.
Thus  $F$ is a Randers metric. Meanwhile, one easily shows   that in this case the connected isometry group is in fact $\mathrm{U}(2n+2)$,
or $\mathrm{SO}(4n+4)$ when $a$ vanishes.

By similar discussions,
we have also shown that $X\in \mathfrak{sp}(n+1)$ can not give a nonzero Killing vector field of constant length, see \cite{DM4}.

The discussion for the case $G=\mathrm{Sp}(n+1)\mathrm{U}(1)$ can also be applied to $G=\mathrm{Sp}(n+1)\mathrm{Sp}(1)$.
Let $X\in \mathfrak{sp}(n+1)\oplus \mathfrak{sp}(1)$ be a Killing vector field of constant length $1$ which is not contained in the $\mathfrak{sp}(1)$
factor.
Then in the $\mathrm{Ad}(G)$-orbit
of $X$, we can find a subset in which each element can be written as $(\mathbf{i}\mathbf{i}X',a\mathbf{i})$, where
$X'$ is a real diagonal matrix and $a$ is a real number. The $X'$s are only different by permutations and sign changes of their diagonal entries, and $a$ can change sign. All these points
are mapped to a one-dimensional subspace in $\mathfrak{m}$, which can contain at most two different points. It is not hard to see that $a$ must be zero. Then $\mathrm{pr}(\mathcal{O}_X)$ is round sphere
(with respect to a bi-invariant metric) centered at $0$. This indicates that $F$ is a bi-invariant Riemannian metric.

The case (7) that $G=G_2$  is trivial. The rank of $G=G_2$ is same as
that of $H=SU(3)$. Hence any Killing vector field $X\in\mathfrak{g}$ is conjugate with
some vector in $\mathfrak{h}$. Thus it must have a zero and can not
be of constant length.

In summarizing, we have proved the following proposition.

\begin{proposition}\label{prop0}
Let $F$ be a $G$-invariant Finsler metric on a sphere $S^n=G/H$, where $G=\mathrm{SO}(n+1)$, $\mathrm{U}(m+1)$ $(n=2m+1)$, $\mathrm{SU}(m+1)$ $(n=2m=1)$,
$\mathrm{Sp}(m+1)$ $(n=4m+3)$, $\mathrm{Sp}(m+1)\mathrm{U}(1)$ $(n=4m+3)$ or $\mathrm{Sp}(m+1)\mathrm{Sp}(1)$ $(n=4m+3)$. If $F$ is CW-homogeneous, then $F$ must be a Randers metric.
\end{proposition}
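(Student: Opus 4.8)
The plan is to prove the proposition by going through the six admissible groups $G$ one at a time, in each case combining two facts established in Section~2. First, a CW-homogeneous Finsler space is in particular restrictively CW-homogeneous, so every tangent vector at the base point $x=eH$ extends to a Killing vector field of constant length (KFCL). Second, by the criterion recorded in Section~2, for any Killing field $X\in\mathfrak{g}$ of constant length $1$ the set $\mathrm{pr}(\mathcal{O}_X)$ lies on the indicatrix $\mathcal{I}_x$ of $F$ in $\mathfrak{m}=T_x(M)$. The whole problem is then to understand the projected adjoint orbits $\mathrm{pr}(\mathcal{O}_X)$ well enough to conclude that $\mathcal{I}_x$ is an ellipsoid --- equivalently, that $F$ is Randers, since the Randers metrics are exactly the Finsler metrics whose indicatrix is an ellipsoid.

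Two cases are immediate. If $G=\mathrm{SO}(n+1)$ then every $G$-invariant $F$ on $S^n$ is already Riemannian, hence Randers with $\beta=0$. If $G=\mathrm{Sp}(m+1)$, then, as recalled above following \cite{DM4}, no nonzero $X\in\mathfrak{sp}(m+1)$ can be a KFCL on $S^{4m+3}$; this contradicts restrictive CW-homogeneity, so no CW-homogeneous $G$-invariant $F$ exists and the implication holds vacuously. (The rank argument used for $G=G_2$ in the text --- $\mathfrak{g}_2$ and $\mathfrak{su}(3)$ have the same rank, so every element of $\mathfrak{g}_2$ is $\mathrm{Ad}$-conjugate into $\mathfrak{h}$ and hence has a zero --- excludes that group in the same way.)

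For $G=\mathrm{U}(m+1)$ and $\mathrm{SU}(m+1)$, I would take a unit KFCL $X$ that is not tangent to the $S^1$-fiber; such an $X$ exists and may be chosen non-central. Its $\mathrm{Ad}(G)$-orbit contains a Weyl-group orbit of diagonal skew-Hermitian matrices, and since $\mathrm{pr}$ is left multiplication on $(0,\dots,0,1)^t$, the $\mathrm{pr}$-images of these diagonal matrices all lie on a single line in $\mathfrak{m}$ parametrized by the eigenvalues of $X$. A line meets the strictly convex hypersurface $\mathcal{I}_x$ in at most two points, so $X$ has at most two distinct eigenvalues, which (as $X$ is non-central) must have opposite signs. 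With $X$ in this normal form, the computation of \cite{DM1} shows that $\mathrm{pr}(\mathcal{O}_X)$ is a genuine (codimension-one) ellipsoid; being a compact hypersurface contained in the compact connected hypersurface $\mathcal{I}_x$, it must coincide with $\mathcal{I}_x$, so $F$ is Randers, and one reads off the full connected isometry group (needed in later sections). The case $G=\mathrm{Sp}(m+1)\mathrm{U}(1)$ proceeds identically once one also tracks the central $\mathbf{i}\mathbb{R}$-component $a\mathbf{i}$ of $X$: the diagonal blocks again have equal absolute values, \cite{DM4} shows $\mathrm{pr}(\mathcal{O}_X)$ is an ellipsoid, and $F$ is Randers. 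Finally, for $G=\mathrm{Sp}(m+1)\mathrm{Sp}(1)$ the same bookkeeping forces $a=0$, so $\mathrm{pr}(\mathcal{O}_X)$ is a round sphere for a bi-invariant metric and $F$ is in fact Riemannian.

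The main obstacle is the ellipsoid identification in the $\mathrm{U}(m+1)$, $\mathrm{SU}(m+1)$ and $\mathrm{Sp}(m+1)\mathrm{U}(1)$ cases: one must show that the projected adjoint orbit of a non-central constant-length Killing field is precisely an ellipsoid, not merely contained in one, which requires the explicit form of the adjoint action together with the description of $\mathrm{pr}:\mathfrak{g}\to\mathfrak{m}$ as left multiplication on $(0,\dots,0,1)^t$. Granting those computations (carried out in \cite{DM1} and \cite{DM4}), the proposition is the assembly of the six cases above.
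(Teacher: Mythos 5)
Your proposal is correct and follows essentially the same route as the paper: the same reduction of CW-homogeneity to the existence of unit KFCLs, the same line-meets-indicatrix argument forcing the eigenvalue restrictions, the same appeal to the computations of \cite{DM1} and \cite{DM4} for the ellipsoid (or round-sphere) identification of $\mathrm{pr}(\mathcal{O}_X)$ in each of the six cases, and the same vacuous/Riemannian conclusions for $\mathrm{Sp}(m+1)$ and $\mathrm{Sp}(m+1)\mathrm{Sp}(1)$. The only additions are minor explicit clarifications (e.g., that the projected orbit, being a compact hypersurface inside the indicatrix, must equal it), which the paper leaves implicit.
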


The only cases left are the case (8) that $S^7=\mathrm{Spin}(7)/G_2$ and
(9) that $S^{15}=\mathrm{Spin}(9)/\mathrm{Spin}(7)$, which will be dealt with by  applying some results on triality for Cayley numbers.
The discussion for these two cases will be postponed to Section 5.

\section{Triality for Cayley numbers and homogeneous spheres}

The algebra of Cayley numbers, denoted as $\mathcal{O}$, can be identified with $\mathbb{H}\oplus\mathbb{H}$ as a real vector space. The multiplication is then
defined by $(q_1,q_2)(s_1,s_2)=(q_1 s_1-\overline{s_2}q_2, s_2 q_1+ q_2\overline{s_1})$.
Denote $\overline{(q_1,q_2)}=(\overline{q_1},-q_2)$, $\mathrm{Re}(q_1,q_2)=\mathrm{Re} (q_1)$ and
$\mathrm{Im}(q_1,q_2)=(\mathrm{Im} (q_1),q_2)$. The inner product on $\mathbb{R}^8=\mathcal{O}$ is then defined as
$\langle(q_1,q_2),(s_1,s_2)\rangle=\mathrm{Re}(\overline{(q_1,q_2)}(s_1,s_2))$.

Any left or right multiplications by an element of unit length preserves the inner product
on $\mathbb{O}$. More generally, we have the following lemma.

\begin{lemma}
$\langle xy,xz\rangle=|x|^2\langle y,z\rangle$, $\langle yx,zx\rangle=|x|^2\langle y,z\rangle$,
$\forall x,y,z\in\mathbb{O}$.
\end{lemma}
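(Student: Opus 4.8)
The plan is to reduce both identities, by polarization, to the single statement that the octonion norm is multiplicative, $|xy|^2 = |x|^2|y|^2$, and then to verify this multiplicativity by a direct computation with the Cayley--Dickson formula. I would use throughout that, for the inner product as defined, $\langle x,x\rangle = \mathrm{Re}(\bar x x) = |x|^2$ (because $\bar x x$ is a real multiple of $1$), and that under the identification $\mathbb{O} = \mathbb{H}\oplus\mathbb{H}$ one has $\langle(q_1,q_2),(s_1,s_2)\rangle = \langle q_1,s_1\rangle_{\mathbb{H}} + \langle q_2,s_2\rangle_{\mathbb{H}}$, where $\langle a,b\rangle_{\mathbb{H}} = \mathrm{Re}(\bar a b)$ is the usual inner product on $\mathbb{H} = \mathbb{R}^4$. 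The corresponding facts on $\mathbb{H}$ are classical and elementary: the quaternion norm is multiplicative, and, since $\mathbb{H}$ is associative with $\bar a a = |a|^2$, one has $\langle ab,c\rangle_{\mathbb{H}} = \langle b,\bar a c\rangle_{\mathbb{H}} = \langle a,c\bar b\rangle_{\mathbb{H}}$.

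To establish multiplicativity, I would write $x = (q_1,q_2)$ and $y = (s_1,s_2)$; the multiplication rule then gives $xy = (q_1 s_1 - \bar s_2 q_2,\ s_2 q_1 + q_2 \bar s_1)$, so that
\begin{eqnarray*}
|xy|^2 &=& |q_1 s_1 - \bar s_2 q_2|^2 + |s_2 q_1 + q_2 \bar s_1|^2\\
&=& (|q_1|^2 + |q_2|^2)(|s_1|^2 + |s_2|^2) + 2\bigl(\langle s_2 q_1,\, q_2\bar s_1\rangle_{\mathbb{H}} - \langle q_1 s_1,\, \bar s_2 q_2\rangle_{\mathbb{H}}\bigr),
\end{eqnarray*}
where the four "square" terms have been combined using multiplicativity of the quaternion norm. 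It then suffices to check that the two cross terms agree, and indeed, applying the adjunction identities above, both $\langle s_2 q_1, q_2\bar s_1\rangle_{\mathbb{H}}$ and $\langle q_1 s_1, \bar s_2 q_2\rangle_{\mathbb{H}}$ equal $\langle q_1, \bar s_2 q_2\bar s_1\rangle_{\mathbb{H}}$, the associativity of $\mathbb{H}$ making the parenthesization of the triple product irrelevant. Hence $|xy|^2 = |x|^2|y|^2$.

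Finally I would obtain the two stated identities by polarization. Keeping $x$ fixed and replacing $y$ by $y+z$ in $|xy|^2 = |x|^2|y|^2$, expanding both sides using left distributivity $x(y+z) = xy+xz$, and cancelling the terms $|xy|^2 = |x|^2|y|^2$ and $|xz|^2 = |x|^2|z|^2$, yields $\langle xy, xz\rangle = |x|^2\langle y,z\rangle$; the same substitution applied to $|yx|^2 = |y|^2|x|^2$ (using right distributivity) gives $\langle yx, zx\rangle = |x|^2\langle y,z\rangle$. The one step needing genuine care is the cross-term cancellation in the second paragraph; everything else is routine bookkeeping with quaternion conjugates. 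One could alternatively first establish the identities $\langle xy,z\rangle = \langle y,\bar x z\rangle$ and $\langle xy,z\rangle = \langle x,z\bar y\rangle$ directly in $\mathbb{O}$ and then specialize $z$ to $xz'$ and to $z'x$, using the alternative law $\bar x(xz') = (\bar x x)z' = |x|^2 z'$; but proving those identities for the non-associative $\mathbb{O}$ costs essentially the same effort.
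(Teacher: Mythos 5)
Your proof is correct. The paper itself offers no argument for this lemma --- it is stated as one of the ``easy and well known facts'' about Cayley numbers --- so there is no in-text proof to compare against; your route (multiplicativity of the octonion norm via the Cayley--Dickson formula, then polarization) is the standard way to supply one, and every step checks out. In particular the only delicate point, the cancellation of the two cross terms, is sound: all four factors involved lie in the associative algebra $\mathbb{H}$, and using $\langle ab,c\rangle_{\mathbb{H}}=\langle b,\bar a c\rangle_{\mathbb{H}}$, $\langle ab,c\rangle_{\mathbb{H}}=\langle a,c\bar b\rangle_{\mathbb{H}}$ (equivalently, cyclicity of $\mathrm{Re}$ on $\mathbb{H}$) both terms reduce to $\mathrm{Re}(\bar q_1\,\bar s_2\, q_2\,\bar s_1)$, so they cancel and $|xy|^2=|x|^2|y|^2$ follows; polarization with left, respectively right, distributivity then yields the two stated identities. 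The alternative you sketch at the end is essentially what the paper's subsequent Lemma 4.2 records: items (1) and (3) there give $\langle xy,xz\rangle=\langle y,\bar x(xz)\rangle=|x|^2\langle y,z\rangle$ at once, so one could also deduce the present lemma from those facts rather than from a direct Cayley--Dickson computation.
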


Denote the left and right multiplication by $w\in\mathbb{O}$ as $L_w$ and $R_w$, respectively. We now recall some easy and well known facts.

\begin{lemma}
For any
$x,y,w\in\mathbb{O}$, we have
\begin{description}
\item{\rm (1)}\quad $\langle x,wy\rangle=\langle \overline{w}x,y\rangle$, i.e., $L_w^t=L_{\overline{w}}$.
Similarly, we have $R_{w}^t=R_{\overline{w}}$.
\item{\rm (2)}\quad  $\langle x,y\rangle=x\overline{y}=-y\overline{x}$.
\item{\rm (3)}\quad  $(xy)\overline{y}=x|y|^2$,  $\overline{x}(xy)=|x|^2 y$.
\item{\rm (4)}\quad Suppose  $x\neq 0$ and denote $x^{-1}=\overline{x}/|x|^2$. Then  $xw=y$ if and only if  $w=x^{-1}y$. Similarly,  $wx=y$ if and only if  $w=yx^{-1}$.
\item{\rm (5)}\quad $L_x L_{\overline y}+L_y L_{\overline{x}}=R_x R_{\overline{y}}+ R_y R_{\overline{x}}=2\langle x,y\rangle\mathrm\,{id}$.
\end{description}
\end{lemma}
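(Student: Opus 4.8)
The plan is to derive everything from three inputs: the explicit Cayley--Dickson formulas for multiplication, conjugation and the inner product on $\mathcal{O}=\mathbb{H}\oplus\mathbb{H}$; the composition identities of the preceding lemma, $\langle xy,xz\rangle=|x|^2\langle y,z\rangle$ and $\langle yx,zx\rangle=|x|^2\langle y,z\rangle$; and the nondegeneracy of $\langle\cdot,\cdot\rangle$. First I would record the book-keeping identities that reduce by a one-line computation with the Cayley--Dickson formula to standard facts about $\mathbb{H}$: $x+\overline{x}=2\,\mathrm{Re}(x)$ is a real multiple of $1$, $x\overline{x}=\overline{x}x=|x|^2$, $\overline{xy}=\overline{y}\,\overline{x}$, $\langle x,1\rangle=\mathrm{Re}(x)$, $\mathrm{Re}(xy)=\mathrm{Re}(yx)$, and $\langle\overline{x},\overline{y}\rangle=\langle x,y\rangle$. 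Part (2) (in the form $x\overline{y}+y\overline{x}=2\langle x,y\rangle$, for which the displayed relation is shorthand) then follows by expanding $(x+y)\overline{(x+y)}=|x+y|^2$ and using $\overline{xy}=\overline{y}\,\overline{x}$.

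For (1), I would polarize $\langle xy,xz\rangle=\langle x,x\rangle\langle y,z\rangle$ in $x$ to get $\langle xy,wz\rangle+\langle wy,xz\rangle=2\langle x,w\rangle\langle y,z\rangle$, then put $w=1$ and use $2\langle x,1\rangle=x+\overline{x}$ together with the fact that $x+\overline{x}$ is a scalar, so that $\langle(x+\overline{x})y,z\rangle=(x+\overline{x})\langle y,z\rangle$; this leaves $\langle y,xz\rangle=\langle\overline{x}y,z\rangle$, i.e. $L_x^t=L_{\overline{x}}$. The formula $R_w^t=R_{\overline{w}}$ comes out of the second composition identity in exactly the same way.

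Parts (3)--(5) are then formal. For (3), for every $z$ one has $\langle(xy)\overline{y},z\rangle=\langle xy,zy\rangle=|y|^2\langle x,z\rangle$ using $R_{\overline{y}}^t=R_y$ and the composition identity, so $(xy)\overline{y}=|y|^2x$ by nondegeneracy; symmetrically $\langle\overline{x}(xy),z\rangle=\langle xy,xz\rangle=|x|^2\langle y,z\rangle$ gives $\overline{x}(xy)=|x|^2y$. Part (4) is immediate: if $xw=y$ then applying $L_{\overline{x}}$ and invoking (3) yields $|x|^2w=\overline{x}y$, hence $w=x^{-1}y$, and the converse and the right-handed statement are the same. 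Finally, (3) says precisely $L_{\overline{x}}L_x=|x|^2\,\mathrm{id}$ and $R_{\overline{y}}R_y=|y|^2\,\mathrm{id}$; polarizing these in $x$ and in $y$ and then replacing $x,y$ by $\overline{x},\overline{y}$ (using $\langle\overline{x},\overline{y}\rangle=\langle x,y\rangle$) produces $L_xL_{\overline{y}}+L_yL_{\overline{x}}=R_xR_{\overline{y}}+R_yR_{\overline{x}}=2\langle x,y\rangle\,\mathrm{id}$, which is (5).

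There is no real obstacle --- the lemma is elementary and ``well known'' --- but the one point that needs care is the logical order: (3)--(5) rest on the adjoint formulas of (1), so (1) must be obtained from the composition identity alone (by polarization), not from (3), on pain of circularity. The only genuinely algebraic step beyond polarization and nondegeneracy is the observation that $x+\overline{x}\in\mathbb{R}\cdot 1$, so that multiplication by it is scalar multiplication; this is exactly what converts the polarized composition identity into the clean relation $L_x^t=L_{\overline{x}}$.
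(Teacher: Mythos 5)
Your proof is correct, and since the paper states this lemma without any proof (it is presented as a collection of ``easy and well known facts''), there is no argument of the authors to compare against; your polarization route from the composition identities $\langle xy,xz\rangle=|x|^2\langle y,z\rangle$, $\langle yx,zx\rangle=|x|^2\langle y,z\rangle$ of the preceding lemma is the standard composition-algebra derivation and each step checks out, including the care you take to obtain (1) before (3)--(5) so as to avoid circularity. Two small remarks: you are right to read item (2) as the identity $x\overline{y}+y\overline{x}=2\langle x,y\rangle$ --- the displayed equality in the paper, taken literally, is a typo --- and your auxiliary facts ($x\overline{x}=|x|^2$, $\overline{xy}=\overline{y}\,\overline{x}$, $x+\overline{x}=2\,\mathrm{Re}(x)\in\mathbb{R}\cdot 1$, $\langle\overline{x},\overline{y}\rangle=\langle x,y\rangle$) do all reduce to one-line checks with the Cayley--Dickson formulas as claimed, so the argument is complete as written.
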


Though the algebra of Cayley numbers is not associative, there are still some partial associativities.
For example, we have the following lemma.

\begin{lemma} Any subalgebra of $\mathbb{O}$ generated by two elements is associative. In particular, the products of the elements within the subalgebra $\mathbb{R}$, $\mathbb{C}$ or
$\mathbb{H}$ are associative.
\end{lemma}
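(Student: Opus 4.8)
The statement is Artin's theorem, so the plan is first to establish that $\mathbb{O}$ is an alternative algebra --- that the associator $[x,y,z]:=(xy)z-x(yz)$ is an alternating trilinear map --- and then to use this to show that the subalgebra generated by any two elements is isomorphic to $\mathbb{R}$, $\mathbb{C}$ or $\mathbb{H}$.

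For the alternativity step I would start from the facts that real scalars are central in $\mathbb{O}$ and that $x+\overline{x}=2\,\mathrm{Re}(x)\in\mathbb{R}$, $x\overline{x}=|x|^{2}$; substituting $\overline{x}=2\,\mathrm{Re}(x)-x$ into the last identity gives the quadratic relation $x^{2}=2\,\mathrm{Re}(x)\,x-|x|^{2}$. Combining this with the identities $\overline{x}(xy)=|x|^{2}y$ and $(yx)\overline{x}=|x|^{2}y$ from the previous lemma, and writing $x=2\,\mathrm{Re}(x)-\overline{x}$, a one-line computation yields $x(xy)=x^{2}y$ and, symmetrically, $(yx)x=yx^{2}$; that is, $[x,x,y]=[y,x,x]=0$. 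Linearizing the first relation in $x$ shows the associator is skew in its first two slots and linearizing the second in $x$ shows it is skew in its last two slots; since the transpositions $(12)$ and $(23)$ generate $S_{3}$, the associator is alternating, and in particular $\mathbb{O}$ is flexible, $(xy)x=x(yx)$.

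Next I would run the Artin-type reduction concretely. Let $A$ be the subalgebra generated by $x$ and $y$; we may assume $x\neq 0$, so $|x|^{2}\cdot 1=2\,\mathrm{Re}(x)\,x-x^{2}\in A$ and hence $1\in A$. If $\mathrm{Im}(x)$ and $\mathrm{Im}(y)$ are $\mathbb{R}$-linearly dependent, then $A$ lies in $\mathbb{R}[\,e\,]$ for a single element $e$, which is commutative and associative ($\cong\mathbb{R}$ or $\mathbb{C}$). Otherwise put $e:=\mathrm{Im}(x)$ and let $f'$ be the component of $\mathrm{Im}(y)$ orthogonal to $e$; then $f'\in A$, $f'\neq 0$, and $A$ is generated by $1,e,f'$. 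After rescaling we may take $|e|=|f'|=1$, and since $e,f'$ are purely imaginary and orthogonal we get $ef'=-f'e=:g$. A short computation with the composition identities $\langle xy,xz\rangle=|x|^{2}\langle y,z\rangle$, $\langle yx,zx\rangle=|x|^{2}\langle y,z\rangle$ and the alternative and flexible laws then shows that $1,e,f',g$ are orthonormal with $e^{2}=(f')^{2}=g^{2}=-1$, $eg=-f'=-ge$ and $f'g=e=-gf'$. Hence $\mathrm{span}_{\mathbb{R}}\{1,e,f',g\}$ is a $4$-dimensional subspace closed under multiplication and satisfying the quaternion relations, so it is isomorphic to $\mathbb{H}$; it contains $e,f',1$ and therefore all of $A$, so $A$ is associative.

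The only genuinely computational point --- and so the natural place for the argument to go wrong --- is the passage from the bilinear identities of the preceding lemmas to the two alternative laws, and thence to the alternating property of the associator; everything afterwards is routine bookkeeping. Alternatively, one may simply invoke Artin's theorem for alternative algebras together with the standard fact that the Cayley--Dickson double $\mathbb{H}\oplus\mathbb{H}$ is alternative.
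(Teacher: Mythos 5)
Your proof is correct, but note that the paper itself offers no argument here at all: the lemma is stated as a standard fact (it is Artin's theorem specialized to $\mathbb{O}$), so there is no ``paper proof'' to match. Your derivation fills that gap in a self-contained way using exactly the identities the paper has already recorded: the quadratic relation $x^{2}=2\,\mathrm{Re}(x)\,x-|x|^{2}$ together with $\overline{x}(xy)=|x|^{2}y$ and $(yx)\overline{x}=y|x|^{2}$ does give $x(xy)=x^{2}y$ and $(yx)x=yx^{2}$, linearization makes the associator alternating (skewness in the slots $(1,2)$ and $(2,3)$ generates $S_{3}$), and in particular yields flexibility. The reduction step also checks out: $1\in A$, the orthogonal component $f'$ of $\mathrm{Im}(y)$ lies in $A$, anticommutativity of orthogonal imaginary units follows from $L_{x}L_{\overline{y}}+L_{y}L_{\overline{x}}=2\langle x,y\rangle\,\mathrm{id}$ applied to $1$, and the products $e(ef')=-f'$, $(ef')f'=-e$, $f'(ef')=(f'e)f'=e$ computed with the alternative and flexible laws show that $\mathrm{span}_{\mathbb{R}}\{1,e,f',ef'\}$ is closed under multiplication with the quaternion table, hence associative and containing $A$. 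The only loose ends are trivial (the case $x=y=0$, and being explicit about which polarized identity gives $ef'=-f'e$), so the argument stands; alternatively, as you say, one could simply cite Artin's theorem plus alternativity of the Cayley--Dickson double of $\mathbb{H}$, which is in effect what the paper implicitly does.
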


The following lemma is the well known Moufang equalities.

\begin{lemma}
For any $x,y.z\in\mathbb{O}$, we have $(xyx)z=x(y(xz))$, $z(xyx)=((zx)y)x$ and $(xy)(zx)=x(yz)x$.
\end{lemma}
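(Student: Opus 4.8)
The plan is to reduce the three Moufang equalities to the alternativity of $\mathbb{O}$, which is an immediate consequence of the preceding lemma. Applying that lemma to the subalgebra generated by $\{x,y\}$ yields at once the left alternative law $(xx)y=x(xy)$, the right alternative law $(yx)x=y(xx)$, and the flexible law $(xy)x=x(yx)$, since in each case only the two elements $x$ and $y$ occur; in particular $xyx$ is unambiguous, so the three displayed products are meaningful. I would then introduce the associator $[a,b,c]=(ab)c-a(bc)$ and polarize: from $0=[a+a',a+a',c]=[a,a',c]+[a',a,c]$ the associator is antisymmetric in its first two slots, and likewise (from the right alternative law) in its last two slots, so the associator is an alternating trilinear map; in particular every associator with two equal arguments vanishes.

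Next I would use the purely formal identity, valid in any algebra and verified by expanding both sides,
\[
[ab,c,d]-[a,bc,d]+[a,b,cd]=[a,b,c]\,d+a\,[b,c,d].
\]
Rewriting the two sides of the first Moufang equality through associators gives
\[
(xyx)z-x(y(xz))=[xy,x,z]+[x,y,xz],
\]
so it suffices to prove $[xy,x,z]+[x,y,xz]=0$. I would obtain this by specializing the displayed identity at suitable tuples $(a,b,c,d)$ (for instance $(x,x,y,z)$, $(x,y,x,z)$ and $(y,x,x,z)$), dropping every associator with two equal entries by alternation and flexibility, and combining the surviving relations; one auxiliary identity needed along the way is $[x,yx,z]=x[x,y,z]$, itself a short consequence of the same displayed identity together with the alternating property.

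Finally, the second Moufang equality follows from the first by applying the conjugation anti-automorphism $\overline{ab}=\bar b\,\bar a$ to $(xyx)z=x(y(xz))$ and relabelling $\bar x,\bar y,\bar z$ as $x,y,z$; and the third equality $(xy)(zx)=x(yz)x$ is handled by the same associator calculus (alternatively: once the first identity is known, the nonzero octonions form a Moufang loop, since $\mathbb{O}$ is a division algebra, and the three Moufang laws are equivalent in any loop, while the cases with a zero argument are trivial). I expect the main obstacle to be the bookkeeping in the second step: the substitutions into the associator identity must be chosen so that every unwanted term is killed by the alternating or flexible law and exactly the Moufang relation is left. Should that become unwieldy, a completely mechanical fallback is to verify the three equalities by direct computation in the model $\mathbb{O}=\mathbb{H}\oplus\mathbb{H}$, using the stated multiplication rule and the associativity of $\mathbb{H}$.
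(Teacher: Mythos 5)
Your argument is correct, and in fact it supplies more than the paper does: the paper states these as ``the well known Moufang equalities'' and offers no proof at all, so there is no in-paper argument to compare against. Your route is the standard one in the theory of alternative algebras: Artin's theorem (the preceding lemma) gives the left/right alternative and flexible laws, polarization makes the associator $[a,b,c]=(ab)c-a(bc)$ alternating, and the Teichm\"uller identity $[ab,c,d]-[a,bc,d]+[a,b,cd]=[a,b,c]d+a[b,c,d]$ (which indeed holds in any algebra) does the rest. I checked the bookkeeping you were worried about: substituting $(x,x,y,z)$, $(x,y,x,z)$, $(y,x,x,z)$ and discarding associators with repeated entries yields both your auxiliary identity $[x,yx,z]=x[x,y,z]$ and $[x,xy,z]=[x,y,xz]$, which together give $[xy,x,z]+[x,y,xz]=0$, i.e.\ the first Moufang law (one step divides by $2$, harmless over $\mathbb{R}$). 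The second law does follow from the first by the conjugation anti-automorphism $\overline{ab}=\bar b\,\bar a$, which is immediate in the $\mathbb{H}\oplus\mathbb{H}$ model, and the middle law follows from the same calculus via $[x,y,zx]=x[x,y,z]$ and then $[xy,z,x]=[x,y,z]x$ from the substitution $(x,y,z,x)$; your alternative appeal to the equivalence of the three Moufang laws in a Moufang loop is also legitimate, though it invokes heavier machinery than needed. The mechanical fallback of verifying the identities directly in $\mathbb{H}\oplus\mathbb{H}$ with the given multiplication rule is likewise sound, just tedious. In short: a complete and standard proof of a fact the paper only cites.
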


A triality triple is defined as a triple $(A,B,C)\in \mathrm{SO}(8)\times \mathrm{SO}(8)\times \mathrm{SO}(8)$, such that
$A(x)B(y)=C(xy)$ for any $x,y\in \mathbb{O}$. The set of all
triality triples is a connected Lie group. The key observation below implies that this Lie group is actually isomorphic to
$\mathrm{Spin}(8)$.

The triality for Cayler numbers refers to the following observations.

\begin{proposition} \label{triality_proposition}
For any $A\in \mathrm{SO}(8)$, there exist exactly two triality triples with
the first entry equal to $A$, i.e., of the form $(A,\pm B,\pm C)$.
The statement is also true for the second and third entries.
\end{proposition}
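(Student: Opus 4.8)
The plan is to prove Proposition~\ref{triality_proposition} by analyzing the structure of the group $T$ of triality triples together with the three projections $\pi_i\colon T\to\mathrm{SO}(8)$ onto the $i$-th entry, and showing each $\pi_i$ is a two-to-one surjective covering homomorphism. By the symmetry of the defining relation $A(x)B(y)=C(xy)$ it suffices to treat one projection, say $\pi_1$ (the other two follow by cyclically rewriting the relation: e.g. $A(x)B(y)=C(xy)$ is equivalent to $B(y)^t$-type identities that permute the roles of the three entries, a point I would state once and reuse).

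First I would establish \emph{surjectivity of $\pi_1$ onto the identity component}, which here is all of $\mathrm{SO}(8)$. The standard route: exhibit, for each unit $w\in\mathbb{O}$, an explicit triality triple whose first entry is the reflection-type or rotation-type map built from $L_w, R_w$, and then invoke that $\mathrm{SO}(8)$ is generated by such elements. Concretely, Lemma on $(xy)\overline y = x|y|^2$ and the Moufang identity $(xy)(zx)=x(yz)x$ give, for a unit $w$, relations like $L_w(x)\,R_w(y)=w x\cdot y w = \dots$, which after bookkeeping produces a triple $(L_w, R_{\bar w}, \text{conjugation by }w)$ or similar; I would pin down the exact triple by direct substitution. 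Since the maps arising this way generate $\mathrm{SO}(8)$ (they include enough rotations in coordinate planes), $\pi_1$ is onto.

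Next I would compute the \emph{kernel of $\pi_1$}. If $(I, B, C)$ is a triality triple then $B(y)=C(y)$ for all $y$ (set $x=1$, using $A=I$ and $1\cdot y = y$), so $B=C$; and then $x\,B(y)=B(xy)$ for all $x,y$. Setting $y=1$ gives $x\,B(1)=B(x)$, so $B=R_{B(1)}$ is right multiplication by the fixed element $e:=B(1)$, and $B\in\mathrm{SO}(8)$ forces $|e|=1$. Substituting back, $x(ye)=(xy)e$ for all $x,y$, i.e.\ $e$ associates on the right with everything; a short argument (or the known fact that the nucleus of $\mathbb{O}$ is $\mathbb{R}$) forces $e\in\mathbb{R}\cap\{|e|=1\}=\{\pm1\}$. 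Hence $\ker\pi_1=\{(I,I,I),(I,-I,-I)\}\cong\mathbb{Z}_2$. This is exactly the assertion that there are precisely two triality triples of the form $(A,\pm B,\pm C)$: given one, multiply by the kernel element.

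\textbf{The main obstacle} is the associativity/nucleus step — pinning down that the only unit $e$ with $x(ye)=(xy)e$ for all $x,y$ is $e=\pm1$ — and, slightly before that, making the surjectivity argument fully rigorous rather than hand-wavy, i.e.\ producing an honest generating set of $\mathrm{SO}(8)$ realized by first entries of triality triples. For the nucleus step I would either cite the standard structure theory of $\mathbb{O}$ or give a self-contained proof: $x(ye)=(xy)e$ with $x=y=$ imaginary units $i,j$ in a quaternion subalgebra forces the $\mathbb{H}$-component constraints, and choosing $x,y$ to cross the two $\mathbb{H}$-summands of $\mathbb{O}=\mathbb{H}\oplus\mathbb{H}$ and using the explicit product $(q_1,q_2)(s_1,s_2)=(q_1s_1-\overline{s_2}q_2,\ s_2q_1+q_2\overline{s_1})$ isolates the scalar. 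Once kernel and surjectivity are in hand, the proposition is immediate, and the remark that $T\cong\mathrm{Spin}(8)$ follows since $T$ is a connected double cover of $\mathrm{SO}(8)$.
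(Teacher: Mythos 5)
The paper never proves Proposition \ref{triality_proposition}; it is invoked as the classical triality theorem for Cayley numbers, so your proposal has to stand on its own. Your strategy—treat the set $T$ of triality triples as a closed subgroup of $\mathrm{SO}(8)^3$ and show each projection $\pi_i$ is onto with kernel of order two—is the standard proof, and the uniqueness half is essentially complete: $(I,B,C)\in T$ forces $B=C$ (take $x=1$), then $B=R_e$ with $e=B(1)$ a unit (take $y=1$), and $x(ye)=(xy)e$ for all $x,y$ puts $e$ in the nucleus of $\mathbb{O}$, so $e=\pm1$; since $-\mathrm{id}\in\mathrm{SO}(8)$ in dimension $8$ and $(A,-B,-C)$ is a triple whenever $(A,B,C)$ is, the fiber over any $A$ in the image is exactly $\{(A,B,C),(A,-B,-C)\}$. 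Your guessed triple is in fact correct: $(L_w,R_{\overline{w}},\,z\mapsto wz\overline{w})$ is a triality triple for unit $w$ (it follows from the Moufang identity plus the fact that the associator of $\mathbb{O}$ is alternating), as is $(L_w,R_w,L_wR_w)$, which the paper itself uses in the proof of Lemma \ref{lie-alg-present}. One point you should write out rather than wave at: the relation $A(x)B(y)=C(xy)$ is not literally symmetric in the three slots, so the reduction of the second and third entries to the first needs the conjugation $\kappa(x)=\overline{x}$ (e.g. $(A,B,C)\mapsto(\kappa B\kappa,\kappa A\kappa,\kappa C\kappa)$ swaps the first two slots, and $(A,B,C)\mapsto(C,\kappa B\kappa,A)$, obtained from $(A(x)B(y))\overline{B(y)}=A(x)|B(y)|^2$, gives the cyclic shift); alternatively, just repeat the two-line kernel computation for $\pi_2,\pi_3$ using the left nucleus.

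The genuine gap is surjectivity of $\pi_1$, which is the substantive half of the statement. The parenthetical justification that the exhibited maps ``include enough rotations in coordinate planes'' does not hold up: for unit $w$, the maps $L_w$, $R_w$ and $z\mapsto wz\overline{w}$ are isoclinic, rotating several mutually orthogonal $2$-planes through equal angles simultaneously, and no rotation in a single coordinate plane is visible among them, so generation of $\mathrm{SO}(8)$ cannot be read off in this way. That unit left and right multiplications generate $\mathrm{SO}(8)$ is true but is itself a nontrivial classical theorem—precisely the content-bearing part of triality—so you must either cite it or prove it. A workable self-contained route: the closed subgroup of $\mathrm{SO}(8)$ generated by $\{L_w,R_w:|w|=1\}$ has Lie algebra containing every $L_u,R_u$ with $u\in\mathrm{Im}\,\mathbb{O}$ together with their brackets, and the identities $L_xL_{\overline{y}}+L_yL_{\overline{x}}=R_xR_{\overline{y}}+R_yR_{\overline{x}}=2\langle x,y\rangle\,\mathrm{id}$ recorded in Section 4 let you check that these span a $28$-dimensional subspace of $\mathfrak{so}(8)$, hence all of it; this is essentially the same computation the paper performs for $\mathrm{Lie}(\mathrm{Spin}(8))$ in Lemma \ref{lie-alg-present}(2). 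With that supplied (and the $\kappa$-twisted symmetry above), your argument closes correctly, and the remark that $T\cong\mathrm{Spin}(8)$ follows once connectedness of $T$ is also checked.
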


The projection from $\mathrm{Spin}(8)$ to $\mathrm{SO}(8)$ is specified as $\pi(A,B,C)=A$, though there are
two others implied by Proposition \ref{triality_proposition}. A subgroup $\mathrm{Spin}(7)$ in this
$\mathrm{Spin}(8)$ is defined by the condition $A(1)=1$, or equivalently $B=C$. A subgroup $G_2$ in
this $\mathrm{Spin}(7)$ is defined by the condition $A=B=C$, which implies that $A$ is an automorphism of
$\mathbb{O}$.

The group $\mathrm{Spin}(8)$ constructed above can  also be contained in a subgroup $\mathrm{Spin}(9)\subset \mathrm{SO}(15)$, as it can be seen as follows. Define
an inner product on
$\mathbb{R}^{16}=\mathcal{O}\oplus\mathcal{O}$ as the orthogonal product of the ones on both
$\mathcal{O}$-factors. Consider the octonian lines $L_m=\{(u,um)|u\in \mathbb{O} \}$ and
$L_\infty=\{(0,u)| u\in \mathbb{O}\}$. Then the group $\mathrm{Spin}(9)$ can be viewed as
\begin{equation}
\{ f\in \mathrm{SO}(16)|
f\, \mbox{maps octonian lines to octonian lines }\}.
\end{equation}
Notice that if we further require that $f(L_0)=L_0$ and $f(L_\infty)=L_\infty$, then it has the form
$f(u,v)=(Au,Cv)$. In this case,  it maps octonian lines to octonian lines. This implies that $A^{-1}(u)C(um)=A^{-1}(1)C(m)=B(m)$ for all $u$ and $m$, where $(A,B,C)$ is an arbitrary  triality triple. Thus the map  $(A,B,B)\to \mathrm{diag}(A,B)$ defines an injection from $\mathrm{Spin}(8)$ into $\mathrm{Spin}(9)$.

With all the groups involved defined as above, the sphere $S^7=\mathrm{Spin}(7)/G_2$ is the images of all $B(1)$, such that
 $(A,B,B)$ is a triality triple in $\mathrm{Spin}(7)$, and the sphere $S^{15}=\mathrm{Spin}(9)/\mathrm{Spin}(7)$ is the images
of all $f(1,0)$ with $f\in \mathrm{Spin}(9)$.

Through direct computation, it is not hard to determine the linear generators of the Lie algebras of
the groups $\mathrm{Spin}(7)$, $\mathrm{Spin}(8)$ and $\mathrm{Spin}(9)$ defined above.

\begin{lemma} \label{lie-alg-present}
Let $\{1, e_i, i=1,\ldots,7\}$ be an orthogonal basis of $\mathbb{O}$ and
  let the groups $\mathrm{Spin}(7)$, $\mathrm{Spin}(8)$ and $\mathrm{Spin}(9)$ be defined as above. Then we have:
\begin{description}
\item{\rm (1)}\quad The Lie algebra of $\mathrm{Spin}(7)$ is linearly generated by the set
$$S_1=\{2(E_{ji}-E_{ij}),L_{e_i}L_{e_j},L_{e_i}L_{e_j}, i<j\},$$
here  $E_{ji}-E_{ij}$ denotes the matrix in $\mathfrak{so}(7)\subset \mathfrak{so}(8)$ whose entries in the first column and the first row are all equal to zero.
\item{\rm (2)}\quad  The Lie algebra of $\mathrm{Spin}(8)$ is linearly generated by the set
 $$S_2=S_1\cup \{L_{e_i},R_{e_i},L_{e_i}+R_{e_i},\, \mbox{for all}\,\, i\}.$$
\item{\rm (3)}\quad  The Lie algebra of $\mathrm{Spin}(9)$ is linearly generated by the set
$$S_3=S_2\cup \left\{ \left(\begin{array}{cc}
 0 & 1 \\
  -1 & 0 \\
  \end{array}
  \right),
\left(\begin{array}{cc}
           0 & R_{e_i} \\
           R_{e_i} & 0 \\
         \end{array}
       \right)\right\},\,\mbox{for all}\,\, i\}.$$
\end{description}
\end{lemma}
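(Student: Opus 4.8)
The idea is to pin down each of the three Lie algebras by differentiating the group conditions that cut them out, to check that every operator (resp.\ tuple of operators) listed in $S_1,S_2,S_3$ satisfies the resulting infinitesimal identity, and to finish by a dimension count. First I would differentiate the triality relation $A(x)B(y)=C(xy)$ at the identity: writing $a,b,c\in\mathfrak{so}(8)$ for the velocities of $A,B,C$, the Lie algebra of $\mathrm{Spin}(8)$ becomes the space of triples $(a,b,c)$ with
\[a(x)\,y+x\,b(y)=c(xy)\qquad\text{for all }x,y\in\mathbb{O}.\]
By Proposition~\ref{triality_proposition} each projection to an $\mathfrak{so}(8)$-factor is a linear isomorphism, so an element is determined by any one of its components, and setting $x=1$ gives $c-b=L_{a(1)}$; hence $\mathrm{Spin}(7)=\{A(1)=1\}$ produces the subalgebra with $a(1)=0$, equivalently $b=c$. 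Differentiating the condition that $f$ preserve octonionic lines gives, in the same way, the Lie algebra of $\mathrm{Spin}(9)\subset\mathfrak{so}(16)$ as the block operators on $\mathcal{O}\oplus\mathcal{O}$ restricting on $L_0\oplus L_\infty$ to $\mathrm{diag}(a,c)$ for a triality velocity $(a,b,c)$, together with the off-diagonal directions that move the base point $(1,0)\in S^{15}$. Since the relevant spheres are $S^7=\mathrm{Spin}(7)/G_2$ embedded through $B\mapsto B(1)$ into the spin representation and $S^{15}=\mathrm{Spin}(9)/\mathrm{Spin}(7)$ embedded through $f\mapsto f(1,0)$ into $\mathcal{O}\oplus\mathcal{O}$, each generator is recorded by its action in those representations, which is exactly the octonion-multiplication form displayed in $S_1,S_2,S_3$ (for $\mathrm{Spin}(7)$ the two spin representations coincide, whence the repeated entry $L_{e_i}L_{e_j}$).

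Next I would verify membership of each listed generator. The computations rest on Lemma parts (1)--(5) --- in particular the Clifford-type identity $L_xL_{\overline{y}}+L_yL_{\overline{x}}=2\langle x,y\rangle\,\mathrm{id}$, which immediately shows $L_{e_i}L_{e_j}$ is skew-symmetric for $i\neq j$ --- on the associativity of any two-generated subalgebra, and on the Moufang equalities $(xyx)z=x(y(xz))$, $z(xyx)=((zx)y)x$, $(xy)(zx)=x(yz)x$. For the $\mathrm{Spin}(7)$-generators one expands $e_i(e_j(xy))$ and applies the first Moufang identity together with alternativity to bring it into the form $a(x)\,y+x(e_i(e_j y))$ with $a$ the asserted multiple of $E_{ji}-E_{ij}$. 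For the seven additional $\mathrm{Spin}(8)$-generators one uses $(xy)(zx)=x(yz)x$ and its linearizations to verify the triality relation among $L_{e_i}$, $R_{e_i}$ and $L_{e_i}+R_{e_i}$. For the eight additional $\mathrm{Spin}(9)$-generators one substitutes the two block matrices appearing in $S_3$ into the differentiated octonionic-line condition and checks it directly, the Moufang identities again handling the right-multiplications $u\mapsto u\,e_i$ produced by the $L_m$-slot.

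The argument then closes by comparing dimensions: $\dim\mathrm{Spin}(7)=\binom{7}{2}=21$, $\dim\mathrm{Spin}(8)=\binom{8}{2}=28=21+7$, and $\dim\mathrm{Spin}(9)=\binom{9}{2}=36=28+8$. So it suffices to check that the $21$ operators $L_{e_i}L_{e_j}$ $(i<j)$ are linearly independent, that the seven elements adjoined in $S_2$ are independent modulo their span, and that the eight elements adjoined in $S_3$ are independent modulo $S_2$; this independence follows by evaluating the operators on $1$ and on the basis vectors $e_k$ and inspecting the resulting images.

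I expect the only genuine difficulty to be bookkeeping: matching the vector-representation description of a generator (such as $2(E_{ji}-E_{ij})$) with the correct pair of spin-representation operators $b=c$, keeping track of signs through the Moufang manipulations, and --- the most delicate point --- correctly linearizing the condition that $f$ maps octonionic lines to octonionic lines, where the image line $L_m$ is itself allowed to vary with $m$. Concluding that $S_1,S_2,S_3$ span the respective Lie algebras, rather than merely sit inside them, is then handled cleanly by the dimension count, which avoids exhibiting every bracket relation by hand.
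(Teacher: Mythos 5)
Your overall architecture (show the listed elements belong to the Lie algebras, then close by the dimension count $21$, $28=21+7$, $36=28+8$) matches the paper's, and your membership computations via the Moufang identities are sound as far as they go. But there is a logical gap in how membership is established. What you verify is that each listed operator (or triple) satisfies the \emph{linearized} defining condition — the infinitesimal triality relation $a(x)y+xb(y)=c(xy)$, resp.\ the differentiated line-preservation condition. That only places the element in the solution space of the linearized equations, which \emph{contains} the Lie algebra of the group but is not automatically equal to it (linearizing a set of conditions that cut out a subgroup can produce a strictly larger space). Consequently your final step does not close: linear independence of the $21$, $28$, $36$ listed elements gives a lower bound on their span inside the linearized solution space, while $\dim\mathrm{Spin}(7)=21$, $\dim\mathrm{Spin}(8)=28$, $\dim\mathrm{Spin}(9)=36$ bounds the Lie algebra; both sit inside the linearized solution space, and without an upper bound on the dimension of that space the two $28$- (or $36$-) dimensional subspaces need not coincide. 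For $\mathrm{Spin}(8)$ and $\mathrm{Spin}(7)$ the missing ingredient is the \emph{infinitesimal} triality principle (if $a=0$ then $b=c=0$, using $x=1$, $y=1$ and the fact that the nucleus of $\mathbb{O}$ is $\mathbb{R}$); this does not follow formally from the group-level Proposition \ref{triality_proposition} as you claim, though it is a short argument. For $\mathrm{Spin}(9)$ the gap is more serious: the condition ``$f$ maps octonion lines to octonion lines'' is existential (the image line varies with $m$), you never actually formulate its linearization — for each $m$ there must exist $\mu$ with $Ru+S(um)=(Pu+Q(um))m+u\mu$ for all $u$, when $X=\bigl(\begin{smallmatrix}P&Q\\ R&S\end{smallmatrix}\bigr)$ — and nothing in your plan shows that the solution space of this condition has dimension $36$, nor that the off-diagonal generators such as $\bigl(\begin{smallmatrix}0&R_{e_i}\\ R_{e_i}&0\end{smallmatrix}\bigr)$ actually exponentiate into $\mathrm{Spin}(9)$.

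The paper avoids this issue entirely by working at the group level: it exhibits explicit curves \emph{inside} the groups — the triality triples $(L_zR_{\bar z},L_z,L_z)$ with $z$ running over a circle in $\mathrm{Im}\,\mathbb{O}$ for $\mathrm{Spin}(7)$, the triples $(L_z,R_z,L_zR_z)$ with $z=\cos t+(\sin t)e_i$ for $\mathrm{Spin}(8)$, and the maps $f_t$, $f_{t;i}$ (easily checked to preserve octonion lines via Artin associativity) for $\mathrm{Spin}(9)$ — and differentiates them at $t=0$, so the listed elements are tangent vectors to actual one-parameter families in the group and membership in the Lie algebra is immediate; the dimension count then finishes as in your plan. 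To repair your version, either adopt this group-level verification for the generators, or add the two missing dimension computations: the injectivity of the projection $(a,b,c)\mapsto a$ on infinitesimal triality triples, and an analysis of the linearized line-preservation condition showing its solution space is exactly $28+8=36$-dimensional (e.g.\ evaluate at $m=0$ to force $R$ to be a right multiplication, then reduce the remaining constraint to an infinitesimal triality relation between $P$ and $S$).
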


\begin{proof}
(1)\quad
From the above easy facts on Cayley numbers, it can be easily checked that $(L_z R_{\overline{z}}, L_z, L_z)$
is a triality triple, for any unit length element $z\in \mathrm{Im}(\mathbb{O})$. Let $z=e_i$ and $z'=(\cos t) e_i+(\sin t) e_j$ for $i<j$.
Then the multiplication of the triality triples corresponding to them gives a one-parameter subgroup in $\mathrm{Spin}{7}$.
Differentiating them for all the pairs $i<j$ at $t=0$ gives the indicated linear basis of the Lie algebra of $\mathrm{Spin}(7)$.

(2)\quad For any unit length element $z\in\mathbb{O}$, there is a triality triple $(L_z, R_z, L_z R_z)$ in $\mathrm{Spin}(8)$. Differentiating them
for $z=\cos t+(\sin t) e_i$ at $t=0$, we get the  generators $\{L_{e_i},R_{e_i},L_{e_i}+R_{e_i},\, \mbox{for all}\,\, i\}$. Together with the generators of the Lie algebra
of $\mathrm{Spin}(7)$, they give a linear basis of the Lie algebra of $\mathrm{Spin}(8)$.

(3) For any $i$ and $t$, the elements $f_t(u,v)=((\cos t) u+(\sin t) v, -(\sin t) u+(\cos t) v)$
and $f_{t;i}=(\cos t u+ (\sin t) v e_i, (\cos t) u e_i +(\sin t) v)$ belong to $\mathrm{Spin}(9)$, i.e., they map octonian lines
to octonian lines. Differentiating $f_t$ and $f_{t;i}$ at $t=0$, we get the linear generators $\mathrm{Lie}\,(\mathrm{Spin}(8))$, $ \left(
                                                                             \begin{array}{cc}
                                                                               0 & 1 \\
                                                                               -1 & 0 \\
                                                                             \end{array}
                                                                           \right)
$ and $\left(
         \begin{array}{cc}
           0 & R_{e_i} \\
           R_{e_i} & 0 \\
         \end{array}
       \right)$ for the Lie algebra of $\mathrm{Spin}(9)$. Together with those elements in $S_2$   which are all block diagonal in $\mathfrak{so}(16)$, they
give a linear basis for the Lie algebra of $\mathrm{Spin}(9)$. This completes the proof of the proposition.
\end{proof}

\section{The coset spaces $S^7=\mathrm{Spin}(7)/G_2$ and $S^{15}=\mathrm{Spin}(9)/\mathrm{Spin}(7)$}
Assume that $X\in {\rm Lie}(\mathrm{Spin}(7))$ defines a Killing vector field of constant length $1$ on $S^7=\mathrm{Spin}(7)/G_2$ for a $\mathrm{Spin}(7)$-invariant Finsler metric $F$. By Lemma \ref{lie-alg-present}, $X$ can be
represented as a triple, whose first entry is a matrix in $\mathfrak{so}(7)$.
We arrange  the order of the basis vectors so that $e_1 e_2= e_3 e_4 = e_5 e_6 = e_7$.
Up to a suitable conjugation, we can write $X$ as $X=(X_1, X_2)$, where
\begin{eqnarray}
X_1 &=& 2a(E_{21}-E_{12})+2b(E_{43}-E_{34})+2c(E_{65}-E_{56}), \\
X_2 &=& a L_{e_1}L_{e_2}+bL_{e_3}L_{e_4}+cL_{e_5}L_{e_6},
aL_{e_1}L_{e_2}+bL_{e_3}L_{e_4}+cL_{e_5}L_{e_6}.
\end{eqnarray}
The projection of $X$ to $\mathfrak{m}$ is $a e_1 e_2+ b e_3 e_4 + c e_5 e_6=(a+b+c)e_7$.
The Weyl group action can arbitrarily change the signs of $a$, $b$ and $c$. So the projection of
the orbit $\mathcal{O}_X$ to $\mathfrak{m}$ contains eight points $(\pm a\pm b\pm c)e_7$, all of them lying in the same line.
So the set $\{\pm a, \pm b, \pm c\} $ contains  at most two elements. This implies that at least two of $a$, $b$ and $c$ must be zero.

Suppose two of $a$, $b$ and $c$ are zero. We assert that the metric $F$ is  a symmetric Riemannian metric.
Without loss of generality, we can simplify $X$ to $(2(E_{21}-E_{12},L_{e_1}L_{e_2},L_{e_1}L_{e_2})$. Notice that
for any  element $z\in\mathrm{Im}\mathbb{O}$ with unit length, one can find $z_1, z_2\in\mathrm{Im} \mathbb{O}$, such that $z=z_1 z_2$ and
$\langle z_1,z_2\rangle=0$. Hence in the conjugation class of $X$, one can find a vector $X'$ whose second entry is $L_{z_1}L_{z_2}$.
This  indicates that $z=\mathrm{pr}X'$ is also a unit vector for $F$. Thus $F$ is a symmetric Riemannian  metric.

In summarizing, we have

\begin{proposition}\label{last1}
Let $F$ be a $\mathrm{Spin}(7)$-invariant   Finsler metric on the sphere $S^7=\mathrm{Spin}(7)/G_2$. If there is
a nonzero vector $X\in \mathfrak{so}(7)=\mathrm{Lie}(\mathrm{Spin}(7))$ defining a Killing vector field of constant length of $F$,
then $F$ is a  symmetric Riemannian metric.
\end{proposition}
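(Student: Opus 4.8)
The plan is to make the three moves sketched before the statement fully rigorous. First I would put $X$ into normal form. Representing $X$ as a triality triple $X=(X_1,X_2,X_2)$ via Lemma~\ref{lie-alg-present}(1), its first entry lies in $\mathfrak{so}(7)\subset\mathfrak{so}(8)$, and since the first-entry map $\mathrm{Spin}(7)\to\mathrm{SO}(7)$ is surjective I may conjugate within $\mathrm{Spin}(7)$ so that $X_1$ is block-diagonal on the coordinate planes $\langle e_1,e_2\rangle,\langle e_3,e_4\rangle,\langle e_5,e_6\rangle$; with the normalization $e_1e_2=e_3e_4=e_5e_6=e_7$ this forces $X_1=2a(E_{21}-E_{12})+2b(E_{43}-E_{34})+2c(E_{65}-E_{56})$ and correspondingly $X_2=aL_{e_1}L_{e_2}+bL_{e_3}L_{e_4}+cL_{e_5}L_{e_6}$. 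Since $\mathrm{Spin}(7)$ acts on $S^7\subset\mathbb{O}$ through the second entry and the origin is $1\in\mathbb{O}$, the Killing field generated by $X$ has value $X_2(1)=(a+b+c)e_7$ at the origin, so $\mathrm{pr}(X)$ lies on the line $\mathbb{R}e_7\subset\mathfrak{m}=\mathrm{Im}\,\mathbb{O}$. After rescaling I may assume the constant length equals $1$.

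Next I would use the sign changes in the Weyl group of $\mathrm{Spin}(7)$, which is of type $B_3$: for every $\epsilon\in\{\pm1\}^3$ the orbit $\mathcal{O}_X$ contains an element of the same standard form with coefficients $(\epsilon_1a,\epsilon_2b,\epsilon_3c)$, whose projection is $(\epsilon_1a+\epsilon_2b+\epsilon_3c)e_7$; hence $\mathrm{pr}(\mathcal{O}_X)$ contains the eight collinear points $(\pm a\pm b\pm c)e_7$. By the constant-length criterion of Section~2 these all lie on the indicatrix of $F$, and since the indicatrix meets each ray from the origin exactly once, $\{\pm a\pm b\pm c\}$ has at most two values; a short case analysis of the partial sums $a\pm b\pm c$ then forces at least two of $a,b,c$ to vanish. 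After a further conjugation permuting the blocks and a rescaling, I may thus assume $X=(2(E_{21}-E_{12}),L_{e_1}L_{e_2},L_{e_1}L_{e_2})$, with $\mathrm{pr}(X)=e_7$ on the indicatrix.

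The heart of the proof, and the step I expect to be the main obstacle, is to show that $\mathrm{pr}(\mathcal{O}_X)$ is then the \emph{whole} Euclidean sphere $S^6\subset\mathrm{Im}\,\mathbb{O}$. The key octonionic fact is that every unit $z\in\mathrm{Im}\,\mathbb{O}$ factors as $z=z_1z_2$ with $z_1,z_2$ an orthonormal pair in $\mathrm{Im}\,\mathbb{O}$: taking any unit $z_1\perp z$ in $\mathrm{Im}\,\mathbb{O}$ and $z_2=z_1^{-1}z=-z_1z$, the identities of Section~4 give $|z_2|=1$, $\mathrm{Re}\,z_2=\langle z_1z,1\rangle=-\langle z,z_1\rangle=0$ and $\langle z_1,z_2\rangle=-\langle z_1,z_1z\rangle=-\langle1,z\rangle=0$. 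Now $G_2\subset\mathrm{Spin}(7)$ consists of the triples $(\phi,\phi,\phi)$ with $\phi$ an automorphism of $\mathbb{O}$, and $G_2$ is transitive on orthonormal pairs of imaginary octonions (it is transitive on $S^6$ with isotropy $\mathrm{SU}(3)$, which is transitive on the unit sphere of the orthogonal complement), so there is $\phi\in G_2$ with $\phi(e_1)=z_1$, $\phi(e_2)=z_2$. Conjugating $X$ by $(\phi,\phi,\phi)$ and using that $\phi$ is multiplicative, the resulting $X'\in\mathcal{O}_X$ has second entry $\phi L_{e_1}L_{e_2}\phi^{-1}=L_{z_1}L_{z_2}$, so $\mathrm{pr}(X')=L_{z_1}L_{z_2}(1)=z_1z_2=z$. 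Hence $S^6\subseteq\mathrm{pr}(\mathcal{O}_X)$, and by the constant-length criterion the indicatrix of $F|_{\mathfrak{m}}$ contains $S^6$.

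Finally, since the indicatrix of the norm $F|_{\mathfrak{m}}$ contains the Euclidean unit sphere and meets every ray only once, it equals $S^6$; positive $1$-homogeneity then forces $F(v)=|v|$ on $\mathfrak{m}$, so $F|_{\mathfrak{m}}$ is Euclidean. As $F$ is $\mathrm{Spin}(7)$-invariant it is therefore, up to scale, the round Riemannian metric on $S^7=\mathrm{Spin}(7)/G_2$, which is a symmetric space; so $F$ is a symmetric Riemannian metric. The only non-routine ingredients are the octonionic factorization and the transitivity of $G_2$ on orthonormal imaginary pairs, which together are precisely what force a single constant-length Killing field in $\mathfrak{so}(7)$ to pin down the metric on all of $S^6$; everything else is linear algebra together with the constant-length criterion.
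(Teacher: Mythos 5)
Your proposal is correct and follows essentially the same route as the paper: the same normal form $X=(X_1,X_2,X_2)$ with coefficients $a,b,c$, the same collinearity/Weyl-group argument forcing two of $a,b,c$ to vanish, and the same final step that every unit imaginary octonion factors as an orthogonal product $z=z_1z_2$ so that conjugation (by $G_2$-triples) puts all of $S^6$ into $\mathrm{pr}(\mathcal{O}_X)$, forcing the indicatrix to be the round sphere. You merely make explicit some details the paper leaves implicit (the construction $z_2=-z_1z$ and the transitivity of $G_2$ on orthonormal imaginary pairs), which is fine.
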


Assume that $X\in\mathrm{Lie}(\mathrm{Spin}(9))$ defines a Killing vector field of constant length $1$ for a $\mathrm{Spin}(9)$-invariant
Finsler metric on the sphere $S^{15}=\mathrm{Spin}(9)/\mathrm{Spin}(7)$. Up to a suitable conjugation, we can assume that $X$
belongs to the subalgebra $\mathrm{Lie}(\mathrm{Spin}(8))$, i.e.,  $X$  is a block diagonal $\mathrm{diag}(A,C)\in \mathfrak{so}(15)$,
or can be represented as a triple $(A,B,C)$. With the submanifold metric, the sphere $S^7=\mathrm{Spin}(8)/\mathrm{Spin}(7)\subset \mathrm{Spin}(9)/\mathrm{Spin}(7)=S^{15}$
has a connected isometric group $\mathrm{SO}(8)$. Thus the above $S^7$ with the submanifold metric is   a symmetric Riemannian metric. The restriction of the vector field $X$  to $S^7$ is a Killing vector field of constant length $1$ with respect to the submanifold metric. Thus by a suitable conjugation and rescaling of the metric, we can assume that the first
entry of $X$ is $L_{e_1}$. Then the triple for $X$ is
$(L_{e_1},R_{e_1},L_{e_1}+R_{e_1})$. Its projection to $\mathfrak{m}$ is $(e_1,0)$. Then its conjugation
by $f_{\pi/2}=\left(
                                                                             \begin{array}{cc}
                                                                               0 & 1 \\
                                                                               -1 & 0 \\
                                                                             \end{array}
                                                                           \right)$, i.e.,
\begin{equation}
f_{\pi/2}X f_{\pi/2}^{-1}=
\left(
                                                                             \begin{array}{cc}
                                                                               0 & 1 \\
                                                                               -1 & 0 \\
                                                                             \end{array}
                                                                           \right)
                                                                           \left(
                                                                             \begin{array}{cc}
                                                                               L_{e_1} & 0 \\
                                                                               0 & L_{e_1}+R_{e_1} \\
                                                                             \end{array}
                                                                           \right)
                                                                           \left(
                                                                             \begin{array}{cc}
                                                                               0 & -1 \\
                                                                               1 & 0 \\
                                                                             \end{array}
                                                                           \right)=
                                                                          \left(
                                                                             \begin{array}{cc}
                                                                               L_{e_1}+R_{e_1} & 0 \\
                                                                               0 & L_{e_1} \\
                                                                             \end{array}
                                                                           \right),\nonumber
\end{equation}
has a projection $2e_1$ in $\mathfrak{m}$. This is  impossible when $X$ is of constant length.

In summarizing, we have the following proposition.

\begin{proposition}\label{last2}
Let $F$ be a $\mathrm{Spin}(9)$-invariant  Finsler metric on the sphere $S^{15}=\mathrm{Spin}(9)/\mathrm{Spin}(7)$. Suppose $X\in \mathrm{Lie}(\mathrm{Spin}(9))$
 defines a Killing field of constant length of $F$. Then $X$ must be zero.
\end{proposition}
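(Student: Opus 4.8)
The plan is to adapt the strategy used for $S^7=\mathrm{Spin}(7)/G_2$, now exploiting the embedding $\mathrm{Spin}(8)\subset\mathrm{Spin}(9)$ of Section~4 together with the two octonion-line subspheres $L_0\cap S^{15}$ and $L_\infty\cap S^{15}$. First I would reduce to $X\in\mathrm{Lie}(\mathrm{Spin}(8))$: since $\mathrm{Spin}(8)$ and $\mathrm{Spin}(9)$ have the same rank, the subgroup $\mathrm{Spin}(8)\subset\mathrm{Spin}(9)$ contains a maximal torus of $\mathrm{Spin}(9)$, hence $X$ is $\mathrm{Ad}(\mathrm{Spin}(9))$-conjugate into $\mathrm{Lie}(\mathrm{Spin}(8))$, and replacing $X$ by such a conjugate preserves both hypothesis and conclusion. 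Then $X$ is an infinitesimal triality triple $(A,B,C)$ with $A,B,C\in\mathfrak{so}(8)$, sitting in $\mathfrak{so}(16)=\mathfrak{so}(\mathcal{O}\oplus\mathcal{O})$ as the block matrix $\mathrm{diag}(A,C)$.

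Next I would restrict $X$ to the two subspheres. The set $S^7_0=L_0\cap S^{15}=\{(u,0):|u|=1\}$ equals $\mathrm{Spin}(8)/\mathrm{Spin}(7)$, is preserved by the $\mathrm{Spin}(8)$-action (on which $X$ acts through $A$), and the induced metric $F|_{S^7_0}$ is $\mathrm{Spin}(8)$-invariant with isotropy $\mathrm{Spin}(7)$ transitive on unit tangent vectors, so it is round. A constant-length Killing field of a round sphere must have square a negative scalar, so $A^2=-\lambda\,\mathrm{id}$ with $\lambda>0$; applying the same to $C$ via $L_\infty\cap S^{15}=\{(0,v):|v|=1\}$, which is carried isometrically onto $S^7_0$ by the element $f_{\pi/2}\in\mathrm{Spin}(9)$ swapping the two octonion lines, gives $C^2=-\lambda'\,\mathrm{id}$ with $\lambda'=\lambda$. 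After rescaling $X$ I may assume $A^2=C^2=-\mathrm{id}$, so $A$ and $C$ are orthogonal complex structures on $\mathcal{O}$.

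Now I would normalize $A$. For a fixed orientation, the orthogonal complex structures on $\mathcal{O}$ form a single $\mathrm{SO}(8)$-conjugacy orbit, and $\mathrm{Ad}(\mathrm{Spin}(8))$ acts on the first entry of a triple exactly through $\mathrm{SO}(8)$-conjugation; so after a further conjugation I may assume $A=L_{e_1}$ (the opposite orientation class, conjugate to $R_{e_1}$, being handled identically after composing everything with octonion conjugation, which interchanges left and right multiplications and carries our $\mathrm{Spin}(9)$ to the conjugate copy built from right octonion lines). By Lemma~\ref{lie-alg-present}, the unique infinitesimal triality triple with first entry $L_{e_1}$ is $(L_{e_1},R_{e_1},L_{e_1}+R_{e_1})$, hence $X=\mathrm{diag}(L_{e_1},L_{e_1}+R_{e_1})$. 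Its $\mathrm{pr}$-image, namely its value at the base point $(1,0)$, is $(L_{e_1}(1),0)=(e_1,0)$; conjugating by the isometry $f_{\pi/2}$ turns $X$ into $\mathrm{diag}(L_{e_1}+R_{e_1},L_{e_1})$, with $\mathrm{pr}$-image $((L_{e_1}+R_{e_1})(1),0)=(2e_1,0)$. Since conjugate Killing fields have the same $\mathrm{Ad}$-orbit, $\mathrm{pr}(\mathcal{O}_X)$ contains both $(e_1,0)$ and $(2e_1,0)$; as $F(2e_1,0)=2F(e_1,0)$ by positive homogeneity, $\mathrm{pr}(\mathcal{O}_X)$ cannot be contained in any level set of $F$, in particular not in the indicatrix, so by the length criterion of Section~2 we conclude $X=0$.

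I expect the real obstacle to be the normalization $A=L_{e_1}$: it requires identifying precisely which skew-symmetric matrices occur as first entries of infinitesimal triality triples and matching the two $\mathrm{SO}(8)$-orientation classes of complex structures with the $L_{e_1}$- and $R_{e_1}$-models, which one can do via the octonion-conjugation symmetry of the whole construction. One may also avoid $f_{\pi/2}$: once $A=L_{e_1}$, the triple already forces $C=L_{e_1}+R_{e_1}$, and since $(L_{e_1}+R_{e_1})^2=2L_{e_1}R_{e_1}-2\,\mathrm{id}$ has eigenvalues $0$ and $-4$ instead of a single value, $C$ fails to be a constant-length Killing field of the round $L_\infty\cap S^{15}$, which is the same contradiction. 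The remaining steps — the rank argument, the roundness of the induced metrics, and the computation of the $\mathrm{pr}$-images — are routine.
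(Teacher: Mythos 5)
Your main line is the paper's own argument: conjugate $X$ into $\mathrm{Lie}(\mathrm{Spin}(8))$, observe via the octonion-line subspheres that the diagonal blocks are (proportional to) orthogonal complex structures, normalize the first entry to $L_{e_1}$, read off the triple $(L_{e_1},R_{e_1},L_{e_1}+R_{e_1})$, and compare with the $f_{\pi/2}$-conjugate. The equal-rank reduction, the roundness of the induced metrics on $L_0\cap S^{15}$ and $L_\infty\cap S^{15}$, and the final comparison of $\mathrm{pr}$-images are all fine, and you have correctly located the one non-routine step: the normalization $A=L_{e_1}$. But your proposed repair of that step does not work. Conjugation inside $\mathrm{Spin}(8)$ moves $A$ only within its $\mathrm{SO}(8)$-class, and composing with octonion conjugation $\kappa$ does not reduce the $R_{e_1}$-class to the $L_{e_1}$-class: $\kappa\oplus\kappa$ maps the family of lines $L_m=\{(u,um)\}$ to the family $\{(v,\overline{m}v)\}$, i.e.\ it conjugates the chosen $\mathrm{Spin}(9)$ onto a different subgroup of $\mathrm{SO}(16)$, and when you transport the data back the problematic class simply reappears (in the transported copy the element with first entry $L_{e_1}$ is $\mathrm{diag}(L_{e_1},-L_{e_1})$, not $\mathrm{diag}(L_{e_1},L_{e_1}+R_{e_1})$).

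The gap is genuine and not repairable by this route. By the second Moufang identity, $(xz)(\bar z y\bar z)=(xy)\bar z$ for unit $z$, so $(R_z,\,L_{\bar z}R_{\bar z},\,R_{\bar z})$ is a triality triple; differentiating at $z=1$ in the direction $e_1$ shows that the unique infinitesimal triality triple with first entry $R_{e_1}$ is $(R_{e_1},\,-(L_{e_1}+R_{e_1}),\,-R_{e_1})$. Thus in the $R_{e_1}$-class your Killing field is $X=\mathrm{diag}(R_{e_1},-R_{e_1})$, which is not conjugate to $\mathrm{diag}(L_{e_1},L_{e_1}+R_{e_1})$ (the spectra in $\mathfrak{so}(16)$ differ), and it squares to $-\mathrm{id}$ on $\mathbb{R}^{16}$. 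Its $f_{\pi/2}$-conjugate is $\mathrm{diag}(-R_{e_1},R_{e_1})$, whose $\mathrm{pr}$-image is $-e_1$ rather than $2e_1$, so no length contradiction arises; nor can one be produced, since such an $X$ is a nonzero Killing field of constant length lying in $\mathrm{Lie}(\mathrm{Spin}(9))$ for the round, hence $\mathrm{Spin}(9)$-invariant, metric on $S^{15}$ (weight-theoretically, $2\varepsilon_1$ in a Cartan subalgebra of $\mathfrak{so}(9)$ acts on the $16$-dimensional spin representation with all weights $\pm1$). So your argument, like the paper's own proof, only disposes of the $L_{e_1}$-class; in the other class the most one can extract is that the indicatrix of $F$ contains the $\mathrm{Spin}(7)$-invariant set $\mathrm{pr}(\mathcal{O}_X)$, which is the kind of conclusion (forcing $F$ toward a Riemannian metric with larger isometry group) that would still serve the main theorem, but it is not the stated conclusion $X=0$, and your proposal as written does not supply the missing argument.
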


Now the main theorem follows from Propositions \ref{prop0}, \ref{last1} and \ref{last2}.

\vspace{8mm}
\noindent {\bf Acknowledgement. }\quad We are deeply grateful to the referee for a  careful reading of the manuscript and valuable suggestions. This work was finished during the first author's visit to the Chern institute of Mathematics. He would like to express his deep gratitude to the members of the institute for their hospitality.


\begin{thebibliography}{99}
\bibitem[AW76]{AW76} R. Azencott, E. Wilson, Homogeneous manifolds with negative curvature I, Trans. Amer. Math. Soc., 215 (1976), 323-362.

\bibitem[BCS00]{BCS00} D. Bao, S.S. Chern, Z. Shen, An Introduction to
Riemann-Finsler Geometry, Springer-Verlag, New York, 2000.

\bibitem[BN081]{BN081} V.N. Berestovskii, Yu.G. Nikonorov, Killing vector fields of constant length
on locally symmetric Riemannian manifolds, Transformation Groups, 13  (2008),
25¨C45.

\bibitem[BN082]{BN082} V.N. Berestovskii, Yu.G. Nikonorov, On $\delta$-homogeneous Riemannian manifolds, Diff. Geom. Appl., 26 2008, 514¨C535.

\bibitem[BN09]{BN09}  V.N. Berestovskii, Yu.G. Nikonorov, Clifford-Wolf homogeneous Riemannian manifolds, Jour. Differ. Geom., 82 (2009), 467-500.

\bibitem[CS04]{CS04}  S.S. Chern, Z. Shen, Riemann-Finsler Geometry,
World Scientific Publishers, 2004.

\bibitem[DX1]{DM1} S. Deng, M. Xu, Clifford-Wolf translations of Finsler spaces, Forum Math., doi: 10.1015/forum-2012-0032.

\bibitem[DX2]{DM2} S. Deng, M. Xu, Clifford-Wolf translations of homogeneous Randers spheres, Israel J. Math., doi: 10.1007/s11856-013-0037-4.


 \bibitem[DX3]{DM3} S. Deng, M. Xu, Clifford-translations of left invariant Randers metrics on compact Lie groups, Quarterly J. Math., doi: 10.1093/qmath/hat003.

\bibitem[DX4]{DM4} M. Xu, S. Deng,  Clifford-Wolf Homogeneous Randers spaces, Journal of Lie Theory, 23 (2013), 837--845.

\bibitem[HE74]{HE74} E. Heintze, On homogeneous manifolds of negative curvature, Math. Ann., 211 (1974), 23-34.

\bibitem[MS43]{MS43} D. Montgomery and H. Samelson, Transformation groups of spheres,
Ann. Math., 44 (1943), 454-470.

\bibitem[MTP]{MTP} V.S. Matveev, M. Troyanov, The Binet-Legendre elliposoid in Finsler geometry, Geometry and Topology, 16 (2012), 2135-2170.

\bibitem[RA04]{RA} H.B. Rademacher, Nonreversible Finsler spaces of positive flag curvature, In: A smapler of Finsler geometry, eds: D. Bao, R. Bryant, S. S. Chern, Z. Shen, MSRI Publ. 50, 2004, 261-302.

\bibitem[SH01]{SH2} Z. Shen, Differential Geometry of Sprays and Finsler Spaces, Kluwer, Dordrecht, 2001.

\bibitem[WO64]{WO64} J.A. Wolf, Homogeneity and bounded isometries in manifolds of negative curvature, Illinois J. Math., 8 (1964), 14-18.

\end{thebibliography}
\end{document}